\newtheorem{theorem}{Theorem}
\newtheorem{letteredtheorem}{Theorem}
\newtheorem{lemma}[theorem]{Lemma}
\newtheorem{corollary}[theorem]{Corollary}
\newtheorem{proposition}[theorem]{Proposition}
\theoremstyle{remark}
\newtheorem{remark}[theorem]{Remark}
\theoremstyle{definition}
\newtheorem{definition}[theorem]{Definition}
\numberwithin{theorem}{section}
\begin{document}
  \title{
    Stability of Kernel Sheaves on Del Pezzo Surfaces
  }
  \author{
    Nick Rekuski
  }
  \date{
    August 14, 2023
  }
  \address{
    Department of Mathematics, Wayne State University, Detroit, MI 48202, USA
  }
  \email{
    rekuski@wayne.edu
  }
  \thanks{
    The author was partially supported by the NSF grant DMS-2101761 during preparation of this article.
    The author is supported by the U.S. Department of Energy, Office of Science, Basic Energy Sciences, under Award Number DE-SC-SC0022134.
    This work is also partially supported by an OVPR Postdoctoral Award at Wayne State University.
  }
  \subjclass[2020]{Primary 14D20}
  \keywords{Stability, kernel bundles, syzygy bundles, Bridgeland stability, Del Pezzo surfaces}

  \begin{abstract}
    Using techniques from Bridgeland stability, we show the kernel sheaf associated to sufficiently positive Gieseker stable sheaf on a Del Pezzo surface is slope stable.
    This is the first effective stability result for kernel sheaves associated to higher rank sheaves on surfaces and the first stability result for kernel sheaves associated to Gieseker stable sheaves.
  \end{abstract}

  \maketitle

\section{Introduction}
  Suppose $X$ is a smooth, irreducible, projective surface.
  If $\mathscr{E}$ is a globally-generated, torsion-free sheaf on $X$ then we can define its kernel sheaf (or syzygy sheaf) as the kernel of the valuation map:
  \[
    0\to\mathscr{M}_{\mathscr{E}}\to H^{0}(\mathscr{E})\otimes\mathscr{O}_{X}\to\mathscr{E}\to 0.
  \]
  The sheaves arising via this construction have a variety of applications, so there has been interest to establish the stability of $\mathscr{M}_{\mathscr{E}}$.
  In the case of curves, Butler showed if $\mathscr{E}$ is slope stable and $\operatorname{deg}(\mathscr{E})>2\operatorname{genus}(X)\operatorname{rank}(\mathscr{E})$ then $\mathscr{M}_{\mathscr{E}}$ is slope stable~\cite{butler1994}*{theorem 1.2}.
  Furthermore, in the case of curves more recent results have improved this bound and considered kernel sheaves associated incomplete linear systems (see \cite{brambila2019} and references within).
  
  In dimensions $2$ and above, the author showed if $\operatorname{rank}(\mathscr{E})=1$ and $h^{0}(\mathscr{E})\gg 0$ (with an explicit bound on the number of global sections) then $\mathscr{M}_{\mathscr{E}}$ is slope stable~\cite{rekuski2023}*{Theorem A}.
  For specific classes of varieties, there are many results for kernel sheaves associated to line bundles with much better positivity bounds than the author's result ~\cites{flenner1984,trivedi2010,coanda2011,camere2012,caucci2021,mukherjee2022,torres2022, miro2023a, miro2023b}.
  However, the only stability result for kernel sheaves associated to higher rank sheaves on surfaces has an ineffective bound on the necessary positivity~\cite{basu2021}.
  
  In this article, we use techniques from Bridgeland stability to give an effective stability results for kernel sheaves associated to higher rank stable sheaves on Del Pezzo surfaces.
  This is the first effective result of this kind.
  Moreover, this is the first stability result for kernel sheaves associated to Gieseker stable sheaves (rather than the stronger notion of $\mu_{H}$-stable sheaves).

  \begin{letteredtheorem}[\ref{theorem:lazarsfeldMukaiOnDelPezzo}]
    \label{mainTheorem}
    Assume $X$ is a smooth Del Pezzo surface over an algebraically closed field (of arbitrary characteristic).
    Let $\mathscr{E}$ be a globally generated, torsion-free, $(-K_{X},-\frac{K_{X}}{2}$)-Gieseker stable (e.g. $\mathscr{E}$ is $\mu_{-K_{X}}$-stable---see Lemma \ref{lemma:implicationsAmongCoherentStability}) sheaf on $X$ with associated kernel sheaf $\mathscr{M}$.
    If the following bounds are satisfied:
    \begin{itemize}
      \item{
        $0<\operatorname{deg}_{-K_{X}}(\mathscr{E})$,
      }
      \item{
        $0<\operatorname{ch}_{2}(\mathscr{E})$, and
      }
      \item{
        $\displaystyle \operatorname{deg}_{-K_{X}}(\mathscr{E})^{2}-2\operatorname{ch}_{2}(\mathscr{E})\operatorname{rank}(\mathscr{E})< 2\frac{\operatorname{ch}_{2}(\mathscr{E})}{\operatorname{deg}_{-K_{X}}(\mathscr{E})}+\frac{1}{\operatorname{rank}(\mathscr{E})^{2}}.
        $
      }
    \end{itemize}
    then $\mathscr{M}$ is $\mu_{-K_{X}}$-stable.
  \end{letteredtheorem}
  
  A priori, the positivity assumptions on $\mathscr{E}$ may never hold.
  To this end, we show in Corollary \ref{corollary:effectiveVersion} that any sufficiently high twist of a torsion-free, slope stable sheaf by $-K_{X}$ satisfies the bounds of Theorem \ref{mainTheorem}.
  We discuss necessity of the assumed bounds in Remark \ref{example:needMoreThanJustPositivity}.
  
\subsection*{Outline}
  In Section \ref{section:stabilityCoherent} we recall well known facts and definitions related to slope stability and twisted Gieseker stability.
  In Section \ref{section:tiltStability} we recall tilt stability then discuss the wall and chamber structure on a family of tilt stability conditions.
  In Section \ref{section:stabilityOfSyzygyBundles} we prove Theorem \ref{mainTheorem}.
  We also use Theorem \ref{mainTheorem} to describe the largest tilt-destabilizing wall of $\mathscr{M}[1]$.
  
  We further outline the proof of Theorem \ref{mainTheorem}.
  The theorem broadly consists of two steps.
  First we show $\mathscr{M}_{\mathscr{E}}[1]$ is tilt stable using wall-crossing for tilt stability.
  Second, we show tilt stability of $\mathscr{M}_{\mathscr{E}}[1]$ implies slope stability of $\mathscr{M}_{\mathscr{E}}$.
  
  The first step of this proof has also been used in other recent results~\cites{bayer2018,kopper2020,feyzbakhsh2022}.
  In short, shifting the short exact sequence
  \[
    0\to\mathscr{M}_{\mathscr{E}}\to H^{0}(\mathscr{E})\otimes\mathscr{O}_{X}\to\mathscr{E}\to 0
  \]
  gives the distinguished triangle
  \[
    \mathscr{E}\to\mathscr{M}[1]\to(H^{0}(\mathscr{E})\otimes\mathscr{O}_{X})[1]\to\mathscr{E}[1]
  \]
  in $D^{b}(X)$.
  The positivity bounds on $\mathscr{E}$ and techniques from Bridgeland stability show $\mathscr{E}$ is $\mu_{\alpha,\beta}^{\mathrm{tilt}}$-stable for $(\beta,\alpha)$ in a certain region of the upper half plane.
  
  For the second step of the theorem, we show $\mu_{\alpha,\beta}^{\mathrm{tilt}}$-stability of $\mathscr{M}[1]$ constrains the Chern characters of any maximal destabilizing subsheaf of $\mathscr{M}$.
  In the case of Del Pezzo surfaces these constraints are enough to show $\mathscr{M}$ is $\mu_{H}$-stable.
  
\subsection*{Notation and Assumptions}
  Assume $X$ is a smooth, projective, surface over an algebraically closed field (of arbitrary characteristic).
  Fix an ample divisor $H$ and $\mathbb{R}$-divisor $D$ on $X$.
  We also assume $X$ satisfies Bogomolov's inequality (see Lemma \ref{lemma:bogomolovInequality}).
  In characteristic $0$ this assumption always holds and in characteristic $p$ it holds for all surfaces with Kodaira dimension at most $1$ that are not quasielliptic (see Remark \ref{remark:characteristicP}).
  In particular, Bogomolov's inequality holds for Del Pezzo surfaces over an algebraically closed field of any characteristic.
  
  The bounded derived category of $\operatorname{Coh}(X)$ will be written $D^{b}(X)$.
  We write $\mathscr{H}^{i}:D^{b}(X)\to\operatorname{Coh}(X)$ for the associated cohomology functors.
  We use script letters (i.e. $\mathscr{E}$, $\mathscr{F}$, $\mathscr{G}$) for coherent sheaves and capital letters (i.e. $E$, $F$, $G$) for chain complexes in $D^{b}(X)$.
  We use $\operatorname{ch}_{i}(\mathscr{E})$ for the $i$th Chern character of $\mathscr{E}$.
  In particular, we write the degree of $\mathscr{E}$ (with respect to the ample divisor $H$) by $\operatorname{deg}_{H}(\mathscr{E})=H\cdot\operatorname{ch}_{1}(\mathscr{E})$.
  Given a chain complex $E\in D^{b}(X)$, we write
  \[
    \operatorname{ch}_{i}(E)=\sum_{i\in\mathbb{Z}}(-1)^{i}\operatorname{ch}_{i}(\mathcal{H}^{i}(E))
  \]

\subsection*{Acknowledgments}
  The author is thankful to Rajesh Kulkarni and Xuqiang Qin for many useful discussions.
  The author is also thankful to Rosa M. Mir{\'o}-Roig for pointing out the reference~\cite{basu2021}.

\section{Stability for Coherent Sheaves}
  \label{section:stabilityCoherent}
  In this section we recall two notions of stability for coherent sheaves: $\mu_{H}$-stability (also called slope, Mumford, or Mumford-Takemoto stability) and $(H,D)$-Gieseker stability.
  The notion of $(H,D)$-Gieseker stability is a variant of usual Gieseker stability that appears naturally in the context of Bridgeland stability.
  Like Gieseker stability, $(H,D)$-Gieseker stability is weaker than $\mu_{H}$-stability but stronger than $\mu_{H}$-semistability.
  
  \subsection*{Slope Stability}
  We discuss $\mu_{H}$-stability and recall Bogomolov's inequality.
  
  \begin{definition}
    \label{def:twistedSlopeStable}
    Suppose $\mathscr{E}\in\operatorname{Coh}(X)$.
    \begin{enumerate}
      \item{
        We define
        \[
          \mu_{H}(\mathscr{E})=
            \begin{cases}
              \frac{\operatorname{deg}_{H}(\mathscr{E})}{\operatorname{rank}(E)}: & \operatorname{rank}(\mathscr{E})\neq 0 \\
              +\infty: & \operatorname{rank}(\mathscr{E})=0
            \end{cases}
        \]
        which is called the \textit{slope} of $\mathscr{E}$ with respect to $H$.
      }
      \item{
        We say nonzero $\mathscr{E}\in\operatorname{Coh}(X)$ is $\mu_{H}$-\textit{(semi)stable} if every proper nonzero subsheaf $0\to\mathscr{F}\to\mathscr{E}$ satisfies $\mu_{H}(\mathscr{F})(\leq)<\mu_{H}(\mathscr{E}/\mathscr{F})$.
      }
    \end{enumerate}
  \end{definition}

  Our definition of $\mu_{H}$-stability differs slightly from the standard definition (e.g. \cite{huybrechts2010}*{Definition 1.2.12})
  For torsion sheaves, our definition is significantly weaker: if $\operatorname{codim}(\mathscr{E})\geq 1$ then $\mathscr{E}$ is $\mu_{H}$-semistable.
  However, if $\mathscr{E}$ is torsion-free then Definition \ref{def:twistedSlopeStable} agrees with the standard definition.
  
  Any coherent sheaf has a filtration where the consecutive quotients are $\mu_{H}$-semistable with descending slopes.
  
  \begin{definition}
    \label{definition:harderNarasimhan}
    Suppose $\mathscr{E}\in\operatorname{Coh}(X)$ is nonzero.
    There exists a filtration, called a \textit{Harder-Narasimhan filtration}, of coherent sheaves
    \[
      0=\mathscr{E}_{0}\subseteq \mathscr{E}_{1}\subseteq\cdots\subseteq\mathscr{E}_{m-1}\subseteq\mathscr{E}_{m}=\mathscr{E}
    \]
    such that $\mathscr{E}_{i}/\mathscr{E}_{i-1}$ is $\mu_{H}$-semistable for $i=1,2,\ldots,m$ and
    \[
      \mu_{H}(\mathscr{E}_{1}/\mathscr{E}_{0})>\mu_{H}(\mathscr{E}_{2}/\mathscr{E}_{1})>\cdots>\mu_{H}(\mathscr{E}_{m}/\mathscr{E}_{m-1}).
    \]
    
    In this case, $\mathscr{E}_{1}$ is called a maximal destabilizing subsheaf and $\mathscr{E}/\mathscr{E}_{m-1}$ is called a minimal destabilizing quotient.
  \end{definition}
  
  A Harder - Narasimhan filtration is unique up to codimension $2$.
  In other words, any two Harder-Narasimhan filtration of $\mathscr{E}$ have the length, and the rank and degree of the corresponding consecutive quotients are equal.
  For this reason, the slope of a minimal or maximal consecutive quotient is independent of filtration and so well-defined.
  
  \begin{definition}
    \label{definition:slopeMaximalDestabilizing}
    Suppose $\mathscr{E}\in\operatorname{Coh}(X)$ is nonzero.
    We define $\mu_{H}^{+}(\mathscr{E})=\mu_{H}(\mathscr{F})$ where $0\to\mathscr{F}\to\mathscr{E}$ is a maximal destabilizing subsheaf.
    Similarly, we define $\mu_{H}^{-}(\mathscr{E})=\mu_{H}(\mathscr{G})$ where $\mathscr{E}\to\mathscr{G}\to0$ is a minimal destabilizing quotient.
  \end{definition}
  
  The quantity $\mu_{H}^{+}$ is well-behaved under extensions.
  The following lemma is proven by Butler for curves, but the same proof holds for surfaces.
  
  \begin{lemma}[\cite{butler1994}*{Lemma 2.4}]
    \label{lemma:boundMaximalInSes}
    If $0\to\mathscr{F}\to\mathscr{E}\to\mathscr{G}\to 0$ is a short exact sequence in $\operatorname{Coh}(X)$ then
    \[
      \mu_{H}^{+}(\mathscr{E})\leq\max\{\mu_{H}^{+}(\mathscr{F}),\mu_{H}^{+}(\mathscr{G})\}.
    \]
  \end{lemma}
  
  We end this subsection by discussing Bogomolov's inequality which says the discriminant of a $\mu_{H}$-semistable sheaf is nonzero.
  When discussing the wall and chamber structure for tilt stability a variant of the discriminant, which we call the $H$-discriminant, appears naturally.
  For this reason we only consider the $H$-discriminant.

  \begin{definition}
    \label{definition:hDDiscriminant}
    Suppose $X$ is a smooth projective surface and $H$ is an ample divisor on $X$
    We define the \textit{$H$-discriminant} of $\mathscr{E}$ as the integer
    \[
      \overline{\Delta}_{H}(\mathscr{E})=\operatorname{deg}_{H}(\mathscr{E})^{2}-2\operatorname{ch}_{2}(\mathscr{E})\operatorname{rank}(\mathscr{E}).
    \]
  \end{definition}
  
  We use the notation $\overline{\Delta}_{H}$ to denote both the dependence on the ample divisor $H$ and to distinguish the $H$-discriminant from the usual discriminant ($\Delta=\operatorname{ch}_{1}^{2}-2\operatorname{ch}_{2}\operatorname{rank}$).
  Either way, the Hodge index theorem~\cite{lazarsfeld2004}*{Corollary 1.6.3} and the usual Bogomolov inequality~\cite{huybrechts2010}*{Theorem 7.3.1} give the following bound on the $H$-discriminant for semistable sheaves.
  
  \begin{lemma}[Bogomolov's Inequality]
    \label{lemma:bogomolovInequality}
    Suppose $X$ is a smooth projective surface over an algebraic closed field of characteristic $0$.
    If $\mathscr{E}\in\operatorname{Coh}(X)$ is $\mu_{H}$-semistable then $\overline{\Delta}_{H}(\mathscr{E})\geq 0$.
  \end{lemma}
  
  We remark on the characteristic $0$ assumption in the Lemma above.
  
  \begin{remark}
    \label{remark:characteristicP}
    If the base field is of characteristic $p>0$ then Bogomolov's inequality does not hold.
    However, Langer shows Lemma \ref{lemma:bogomolovInequality} holds for surfaces of Kodaira dimension $\leq 1$ that are not quasielliptic~\cite{langer2016}*{Theorem 7.1}.
    In particular, Lemma \ref{lemma:bogomolovInequality} holds for Del Pezzo surfaces over any algebraically closed base field.
  \end{remark}

  \subsection*{Twisted Gieseker Stability}
  We now recall $(H,D)$-Gieseker stability.
  For fixed ample divisor $H$ and $\mathbb{R}$-divisor $D$, $(H,D)$-Gieseker stability is a variant of Gieseker stability due to Matsuki and Wentworth \cite{matsuki1997}*{Definition 3.2}.
  We introduce $(H,D)$-Gieseker stability because it arises naturally in the study of Bridgeland stability.
  However, $(H,D)$-Gieseker stability originally arose as a method to understand Gieseker stability under twists.
  In contrast to $\mu_{H}$-stability, Gieseker stability is \textit{not} invariant under twists by a line bundle and $(H,D)$-Gieseker stability partially accounts for this failure.
  Namely, for an integral divisor $D$, $\mathscr{E}(D)$ is Gieseker (semi)stable with respect to $H$ if and only if $\mathscr{E}$ is $(H,D)$-Gieseker (semi)stable.
  If $\operatorname{Pic}(X)=\mathbb{Z}$ and $D$ is a $\mathbb{R}$-divisor then $(H,D)$-Gieseker stability is equivalent to usual Gieseker stability.
  
  \begin{definition}
    \label{definition:giesekerStability}
    Assume $\mathscr{E}\in\operatorname{Coh}(X)$ and $D$ is a $\mathbb{R}$-divisor.
    \begin{enumerate}
      \item{
        We define the \textit{reduced $D$-twisted Hilbert polynomial} of $\mathscr{E}$ to be
        \[
          G_{H}^{D}(\mathscr{E})(t)=
          \begin{cases}
            \frac{\chi(\mathscr{E}\otimes\mathscr{O}_{X}(tH+D))}{\operatorname{rank}(\mathscr{E})}: & \operatorname{rank}(\mathscr{E})\neq 0 \\
            +\infty: & \operatorname{rank}(\mathscr{E})= 0.
          \end{cases}
        \]
        
        Note $\mathscr{O}_{X}(tH+D)$ is an abuse of notation.
        Namely, $D$ is a $\mathbb{R}$-divisor so $\mathscr{O}_{X}(tH+D)$ is not necessarily a line bundle.
        However, we can still define $\chi(\mathscr{E}\otimes\mathscr{O}_{X}(tH+D))$ formally as a polynomial in $t$ via the Hirzebruch-Riemann-Roch theorem.
      }
      \item{
        We say nonzero $\mathscr{E}$ is $(H,D)$-\textit{Gieseker (semi)stable} if every nonzero proper subsheaf $\mathscr{F}\subseteq\mathscr{E}$ satisfies $G_{H}^{D}(\mathscr{F})(t)(\leq)< G_{H}^{D}(\mathscr{E}/\mathscr{F})(t)$ for all $t\gg 0$.
      }
    \end{enumerate}
  \end{definition}

   The first few terms of the reduced $D$-twisted Hilbert polynomial can be written solely in terms of $\mu_{H}$, $D$, and invariants of $X$.
  This allows the following comparison of $\mu_{H}$-stability and $(H,D)$-Gieseker stability.
  \begin{lemma}
    \label{lemma:implicationsAmongCoherentStability}
    Fix an ample divisor $H$ and an $\mathbb{R}$-divisor $D$.
    Suppose $\mathscr{E}\in\operatorname{Coh}(X)$ is torsion-free.
    Each of the following statements implies the next.
    \begin{itemize}
      \item{
        $\mathscr{E}$ is $\mu_{H}$-stable
      }
      \item{
        $\mathscr{E}$ is $(H,D)$-Gieseker stable
      }
      \item{
        $\mathscr{E}$ is $(H,D)$-Gieseker semistable.
      }
      \item{
        $\mathscr{E}$ is $\mu_{H}$-semistable.
      }
    \end{itemize}
  \end{lemma}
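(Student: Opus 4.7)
The plan is to expand the reduced twisted Hilbert polynomial via Hirzebruch--Riemann--Roch, read off the leading two coefficients, and then compare the resulting polynomial inequalities termwise. The middle implication $(H,D)$-Gieseker stable $\Rightarrow (H,D)$-Gieseker semistable is immediate from the definition, so I would focus on the two outer implications.

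First I would compute, for any $\mathscr{F}\in\operatorname{Coh}(X)$ of positive rank, that
\[
  G_{H}(\mathscr{F})(t)=\frac{H^{2}}{2}t^{2}+\left(\mu_{H}(\mathscr{F})+H\cdot D-\frac{H\cdot K_{X}}{2}\right)t+O(1),
\]
by applying Hirzebruch--Riemann--Roch to $\mathscr{F}\otimes\mathscr{O}_{X}(tH+D)$ and using Noether's formula to absorb $\operatorname{td}_{2}(X)$ into $\chi(\mathscr{O}_{X})$. The key structural feature is that the leading coefficient $H^{2}/2$ is independent of $\mathscr{F}$, while the coefficient of $t$ depends on $\mathscr{F}$ only through $\mu_{H}(\mathscr{F})$: the extra summand $H\cdot D-\tfrac{1}{2}H\cdot K_{X}$ is a global constant. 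Hence, for any two nonzero sheaves of positive rank, the asymptotic order of $G_{H}$ at $t\gg 0$ is dictated by the order of their slopes.

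For the implication $\mu_{H}$-stable $\Rightarrow (H,D)$-Gieseker stable, I would first observe that a $\mu_{H}$-stable sheaf is automatically torsion-free, since a nonzero torsion subsheaf would have slope $+\infty$ and violate the stability inequality. Then for any nonzero proper subsheaf $\mathscr{F}\subseteq\mathscr{E}$ I split into cases: if both $\operatorname{rank}(\mathscr{F})$ and $\operatorname{rank}(\mathscr{E}/\mathscr{F})$ are positive, the strict inequality $\mu_{H}(\mathscr{F})<\mu_{H}(\mathscr{E}/\mathscr{F})$ transfers via the expansion above to a strict inequality on the $t$-coefficients of $G_{H}$, giving $G_{H}(\mathscr{F})(t)<G_{H}(\mathscr{E}/\mathscr{F})(t)$ for $t\gg 0$; if $\mathscr{E}/\mathscr{F}$ has rank zero the right-hand side is $+\infty$ by convention and the inequality is automatic. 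The implication $(H,D)$-Gieseker semistable $\Rightarrow \mu_{H}$-semistable proceeds by the same case analysis in reverse: a non-strict inequality on $G_{H}$ at $t\gg 0$ forces the corresponding inequality on the $t$-coefficients, hence on the slopes, and the rank-zero situations are again handled uniformly by the $+\infty$ conventions.

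I do not expect a serious obstacle here. The only substantive input is the Riemann--Roch expansion, which is a routine computation. The only care required is the bookkeeping for subsheaves or quotients of rank zero, and for this the conventions $\mu_{H}=+\infty$ and $G_{H}=+\infty$ are tailored so that all four implications go through without exception.
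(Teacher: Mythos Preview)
Your argument is correct and is exactly the standard one: expand $G_{H}(\mathscr{F})(t)$ by Hirzebruch--Riemann--Roch, observe that the $t^{2}$-coefficient is the universal constant $H^{2}/2$ while the $t$-coefficient equals $\mu_{H}(\mathscr{F})$ plus a constant depending only on $(X,H,D)$, and then read off the implications by comparing coefficients lexicographically. The paper itself gives no proof of this lemma---it is stated as a recalled fact, in the spirit of \cite{huybrechts2010}*{Lemma 1.2.13} for ordinary Gieseker stability---so there is nothing further to compare.

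One tiny bookkeeping remark: your sentence ``a $\mu_{H}$-stable sheaf is automatically torsion-free'' tacitly assumes $\operatorname{rank}(\mathscr{E})>0$. With the paper's conventions a torsion sheaf can be $\mu_{H}$-stable only when it is simple (any proper nonzero subsheaf forces $+\infty<+\infty$), and then the Gieseker inequality is vacuous; so the omitted case is trivial and does not affect the argument.
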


\section{Tilt Stability}
  \label{section:tiltStability}
  In this section we recall the definition and relevant properties of tilt stability.
  Tilt stability is a family of Bridgeland stability conditions constructed by Bridgeland and Arcara-Betram~ \cites{bridgeland2008,arcara2013}.
  This family is parameterized by $\mathbb{R}\times\mathbb{R}_{>0}$ and, once we fix an object $E$, there is convenient wall and chamber structure where stability of $E$ can only change when crossing a wall.
  Moreover, the Large Volume Limit allows us to compare $\mu_{H}$-stability and $(H,\frac{K_{X}}{2})$-Gieseker stability to tilt stability.
  The results and definitions of this section are mostly well-known.
  The author has not seen Lemma \ref{lemma:maximalDestabilizingSubobjectIsInHeart} or Corollary \ref{lemma:boundOnLargestWall} explicitly written but expects they are well known to experts.
  
\subsection*{Preliminaries}
  For fixed ample divisor, tilt stability is a family of stability conditions on $X$ parameterized by $\mathbb{R}\times\mathbb{R}_{>0}$.
  For each $(\beta,\alpha)\in\mathbb{R}\times\mathbb{R}_{>0}$ there is a slope function $\mu_{\alpha,\beta}^{\mathrm{tilt}}:\operatorname{Coh}_{H}^{\beta}(X)\to\mathbb{R}\cup\{+\infty\}$ satisfying a positivity property.
  Furthermore, the function $\mu_{\alpha,\beta}^{\mathrm{tilt}}$ satisfies a variant of the Harder-Narasimhan filtration and Bogomolov's Inequality (c.f. Definition \ref{definition:harderNarasimhan} and Lemma \ref{lemma:bogomolovInequality})~\cites{bridgeland2008,arcara2013}. 
  
  \begin{definition}[\cite{bridgeland2008}*{Section 6}]
    \label{definition:explicitDescriptionOfHeart}
    Fix an integral ample divisor $H$ on $X$.
    For each $\beta\in\mathbb{R}$ we define
    \begin{itemize}
      \item{
        $\mathcal{T}_{H}^{\beta}(X)$ to be the full subcategory of $\operatorname{Coh}(X)$ generated by $\{\mathscr{E}\in\operatorname{Coh}(X)\mid \mu_{H}^{-}(\mathscr{E})>\beta\}$ where $\mu_{H}^{-}(\mathscr{E})$ is the slope of a minimal destabilizing quotient (see Definition \ref{definition:slopeMaximalDestabilizing}).
      }
      \item{
        $\mathcal{F}_{H}^{\beta}(X)$ to be the full subcategory of $\operatorname{Coh}(X)$ generated by $\{\mathscr{E}\in\operatorname{Coh}(X)\mid\mu_{H}^{+}(\mathscr{E})\leq\beta\}$ where $\mu_{H}^{+}(\mathscr{E})$ is the slope of a maximal destabilizing subobject.
      }
      \item{
        $\operatorname{Coh}_{H}^{\beta}(X)$ to be the full subcategory of $D^{b}(X)$ generated by
        \[
          \{E\in D^{b}(X)\mid \mathscr{H}^{-1}(E)\in\mathcal{F}_{H}^{\beta}(X),\mathscr{H}^{0}(E)\in\mathcal{T}_{H}^{\beta}(X),\mathscr{H}^{i}(E)=0 \ \text{if} \ i\neq -1,0\}
        \]
      }
    \end{itemize}
  \end{definition}
  
  By \cite{happel1996}*{Proposition 2.1} and \cite{bridgeland2008}*{Lemma 6.1}, for all $\beta\in\mathbb{R}$ the category $\operatorname{Coh}_{H}^{\beta}(X)$ is the heart of a bounded $t$-structure on $D^{b}(X)$.
  In particular, $\operatorname{Coh}_{H}^{\beta}(X)$ is an abelian category.
  
  Up to shifting, every coherent sheaf is an object in $\operatorname{Coh}_{H}^{\beta}(X)$.
  To explain, if we formally define $\mu_{H}^{+}(0)=-\infty$ and $\mu_{H}^{-}(0)=+\infty$ then $E\in\operatorname{Coh}_{H}^{\beta}(X)$ if and only if both $\beta\in [\mu_{H}^{+}(\mathscr{H}^{-1}(E)),\mu_{H}^{-}(\mathscr{H}^{0}(E)))$ and $\mathscr{H}^{i}(E)=0$ for $i\neq 0,-1$.
  In particular, $\mathscr{E}[1]\in\operatorname{Coh}_{H}^{\beta}(X)$ for all $\beta\geq\mu_{H}^{+}(\mathscr{E})$ and $\mathscr{E}\in\operatorname{Coh}_{H}^{\beta}(X)$ for all $\mu_{H}^{-}(\mathscr{E})<\beta$.
  
  On the other hand, an injection of coherent sheaves does \textit{not} induce an injection in $\operatorname{Coh}_{H}^{\beta}(X)$.
  However, the following lemma shows the inclusion of a maximal destabilizing subsheaf \textit{does} induce an injection.
  
  \begin{lemma}
    \label{lemma:maximalDestabilizingSubobjectIsInHeart}
    Suppose $\mathscr{E}\in\operatorname{Coh}(X)$ is nonzero.
    \begin{enumerate}
      \item{
        Assume $\mathscr{E}$ is not $\mu_{H}$-semistable.
        If $0\to\mathscr{F}\to\mathscr{E}$ is a maximal $\mu_{H}$-destabilizing subobject then $\mu_{H}^{+}(\mathscr{E}/\mathscr{F})\leq\mu_{H}(\mathscr{F})$.
        In particular, the morphism $\mathscr{F}[1]\to\mathscr{E}[1]$ in $D^{b}(X)$ induced by the inclusion is injective in $\operatorname{Coh}_{H}^{\beta}(X)$ for all $\beta\geq\mu_{H}^{+}(\mathscr{E})$.
      }
      \item{
        Assume $\mathscr{E}$ is $\mu_{H}$-semistable.
        If $0\to\mathscr{F}\to\mathscr{E}$ is a coherent subsheaf satisfying $\mu_{H}(\mathscr{F})=\mu_{H}(\mathscr{E})=\mu_{H}(\mathscr{E}/\mathscr{F})$ then the morphism $\mathscr{F}[1]\to\mathscr{E}[1]$ in $D^{b}(X)$ induced by the inclusion is injective in $\operatorname{Coh}_{H}^{\beta}(X)$ for all $\beta\geq\mu_{H}(\mathscr{E})$.
      }
    \end{enumerate}
  \end{lemma}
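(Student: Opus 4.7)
For both parts I would identify the cone of $\mathscr{F}[1]\to\mathscr{E}[1]$ in $D^{b}(X)$ and show it already lives in the heart $\operatorname{Coh}_{H}^{\beta}(X)$; once that is done, a standard argument turns the distinguished triangle into a short exact sequence in the heart, which is exactly the statement that $\mathscr{F}[1]\to\mathscr{E}[1]$ is injective. Concretely, the short exact sequence $0\to\mathscr{F}\to\mathscr{E}\to\mathscr{E}/\mathscr{F}\to 0$ in $\operatorname{Coh}(X)$ rotates to a distinguished triangle $\mathscr{F}[1]\to\mathscr{E}[1]\to(\mathscr{E}/\mathscr{F})[1]\to\mathscr{F}[2]$, so the problem reduces to checking that $\mathscr{F}$, $\mathscr{E}$, and $\mathscr{E}/\mathscr{F}$ all lie in $\mathcal{F}_{H}^{\beta}(X)$, that is, $\mu_{H}^{+}(-)\leq\beta$ for each.

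\textbf{Part (1).} Write a Harder-Narasimhan filtration $0=\mathscr{E}_{0}\subset\mathscr{E}_{1}=\mathscr{F}\subset\mathscr{E}_{2}\subset\cdots\subset\mathscr{E}_{m}=\mathscr{E}$ (the case $m=1$ is vacuous since then $\mathscr{E}$ is $\mu_{H}$-semistable). The induced filtration of $\mathscr{E}/\mathscr{F}$ has the same successive quotients $\mathscr{E}_{i}/\mathscr{E}_{i-1}$ for $i\geq 2$, hence is itself a Harder-Narasimhan filtration; from this I read off $\mu_{H}^{+}(\mathscr{E}/\mathscr{F})=\mu_{H}(\mathscr{E}_{2}/\mathscr{E}_{1})<\mu_{H}(\mathscr{E}_{1})=\mu_{H}^{+}(\mathscr{F})$, which proves the stated slope inequality (even strictly). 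Since $\mathscr{F}$ is $\mu_{H}$-semistable we have $\mu_{H}^{+}(\mathscr{F})=\mu_{H}(\mathscr{F})=\mu_{H}^{+}(\mathscr{E})\leq\beta$, and combining with the previous inequality gives $\mu_{H}^{+}(\mathscr{E}/\mathscr{F})\leq\beta$ as well; the bound on $\mathscr{E}$ itself is the hypothesis. Hence all three terms of the triangle lie in $\mathcal{F}_{H}^{\beta}$ and the triangle sits in the heart.

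\textbf{Part (2).} Here I would separately check that $\mathscr{F}$ and $\mathscr{E}/\mathscr{F}$ are $\mu_{H}$-semistable of slope $\mu_{H}(\mathscr{E})$. For $\mathscr{F}$, any subsheaf $\mathscr{F}'\subseteq\mathscr{F}$ is also a subsheaf of $\mathscr{E}$, so by semistability of $\mathscr{E}$ we have $\mu_{H}(\mathscr{F}')\leq\mu_{H}(\mathscr{E})=\mu_{H}(\mathscr{F})$. For $\mathscr{E}/\mathscr{F}$, a subsheaf $\mathscr{G}/\mathscr{F}$ comes from a subsheaf $\mathscr{F}\subseteq\mathscr{G}\subseteq\mathscr{E}$; the additivity of $\deg_{H}$ and $\operatorname{rank}$ in the sequence $0\to\mathscr{F}\to\mathscr{G}\to\mathscr{G}/\mathscr{F}\to 0$, combined with $\mu_{H}(\mathscr{G})\leq\mu_{H}(\mathscr{E})=\mu_{H}(\mathscr{F})$ from semistability, forces $\mu_{H}(\mathscr{G}/\mathscr{F})\leq\mu_{H}(\mathscr{E})$. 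Thus $\mu_{H}^{+}(\mathscr{F})=\mu_{H}^{+}(\mathscr{E}/\mathscr{F})=\mu_{H}(\mathscr{E})\leq\beta$, and the triangle again lives in $\operatorname{Coh}_{H}^{\beta}(X)$.

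\textbf{Main obstacle.} There is no deep obstacle; everything is an exercise in the definitions once the key observation is made that proving injectivity in the heart is equivalent to showing the cone of the morphism lies in the heart. The one point that needs a bit of care is the slope bookkeeping in part (2) when $\mathscr{G}/\mathscr{F}$ has rank zero, which must be handled via the $+\infty$ conventions of Definition \ref{def:twistedSlopeStable}; this is a minor case check rather than a real difficulty.
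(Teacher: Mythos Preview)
Your overall strategy matches the paper's: verify that $\mathscr{F}$, $\mathscr{E}$, and $\mathscr{E}/\mathscr{F}$ all lie in $\mathcal{F}_H^\beta$ so that the rotated triangle becomes short exact in the heart. Part~(2) is handled identically (the paper simply asserts that $\mathscr{F}$ and $\mathscr{E}/\mathscr{F}$ are $\mu_H$-semistable of slope $\mu_H(\mathscr{E})$; you supply the routine details). For Part~(1), however, you take a cleaner route. The paper proves $\mu_H^+(\mathscr{E}/\mathscr{F})\leq\mu_H(\mathscr{F})$ by induction on the Harder--Narasimhan length, feeding the exact sequence $0\to\mathscr{E}_{m-1}/\mathscr{E}_1\to\mathscr{E}/\mathscr{E}_1\to\mathscr{E}/\mathscr{E}_{m-1}\to 0$ into the bound $\mu_H^+\leq\max\{\mu_H^+,\mu_H^+\}$ for extensions. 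Your observation that the truncated filtration $0\subset\mathscr{E}_2/\mathscr{E}_1\subset\cdots\subset\mathscr{E}_m/\mathscr{E}_1=\mathscr{E}/\mathscr{F}$ is already a Harder--Narasimhan filtration of $\mathscr{E}/\mathscr{F}$ reads off $\mu_H^+(\mathscr{E}/\mathscr{F})=\mu_H(\mathscr{E}_2/\mathscr{E}_1)$ in one step and dispenses with the induction entirely; this is shorter and more transparent.

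One caution on the point you flag as a ``minor case check'': it is not quite minor. The hypotheses of Part~(2) do not prevent $\mathscr{E}/\mathscr{F}$ from acquiring $0$-dimensional torsion (take $\mathscr{E}=\mathscr{O}_X^{\oplus 2}$ and $\mathscr{F}$ the ideal sheaf of a point embedded in one summand). In that situation $\mu_H^+(\mathscr{E}/\mathscr{F})=+\infty$, the shift $(\mathscr{E}/\mathscr{F})[1]$ is not in the heart, and $\mathscr{F}[1]\to\mathscr{E}[1]$ genuinely fails to be injective in $\operatorname{Coh}_H^\beta(X)$. The paper's proof has the same gap; in the intended application (Lemma~\ref{lemma:boundingSecondChern}) one may take $\mathscr{F}$ saturated in the torsion-free sheaf $\mathscr{E}$, which removes the problem.
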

  \begin{proof}
    By \cite{beilinson1981}*{Page 32}, an injection $0\to\mathscr{F}[1]\to\mathscr{E}[1]$ in $\operatorname{Coh}_{H}^{\beta}(X)$ is equivalent to the inequality $\mathscr{F}[1],\mathscr{E}/\mathscr{F}[1]\in\operatorname{Coh}_{H}^{\beta}(X)$ for all $\beta\geq\mu_{H}^{+}(\mathscr{E})$.
    In other words,, it suffices to show $\beta\geq\mu_{H}^{+}(\mathscr{E})\geq\mu_{H}^{+}(\mathscr{E}/\mathscr{F}),\mu_{H}^{+}(\mathscr{F})$.
    By definition, $\mu_{H}^{+}(\mathscr{E})=\mu_{H}(\mathscr{F})=\mu_{H}^{+}(\mathscr{F})$, so it remains to show $\mu_{H}^{+}(\mathscr{E}/\mathscr{F})\leq\mu_{H}(\mathscr{F})$.
    In part (2) this is true by assumption, so we only consider part (1). 
    
    We proceed by induction on the length of a Harder-Narasimhan filtration of $\mathscr{E}$.
    If $\mathscr{E}$ has a Harder-Narasimhan filtration of length $2$ then $\mathscr{E}/\mathscr{F}$ is $\mu_{H}$-stable with $\mu_{H}(\mathscr{E}/\mathscr{F})<\mu_{H}(\mathscr{F})$, as claimed.
    Thus, consider a Harder-Narasimhan filtration
    \[
      0=\mathscr{E}_{0}\to\mathscr{E}_{1}\to\cdots\to\mathscr{E}_{m-1}\to\mathscr{E}_{m}=\mathscr{E}.
    \]
    By the inductive hypothesis, we know $\mu_{H}^{+}(\mathscr{E}_{m-1}/\mathscr{E}_{1})<\mu_{H}(\mathscr{E}_{1})$.
    We have the following short exact sequence
    \[
      0\to\mathscr{E}_{m-1}/\mathscr{E}_{1}\to\mathscr{E}/\mathscr{E}_{1}\to\mathscr{E}/\mathscr{E}_{m-1}\to 0.
    \]
    Therefore, by Lemma \ref{lemma:boundMaximalInSes},
    \[
      \mu_{H}^{+}(\mathscr{E}/\mathscr{E}_{1})\leq\max\{\mu_{H}^{+}(\mathscr{E}_{m-1}/\mathscr{E}_{1}),\mu_{H}^{+}(\mathscr{E}/\mathscr{E}_{m-1})\}.
    \]
    However, by definition of a Harder-Narasimhan filtration $\mu_{H}^{+}(\mathscr{E}/\mathscr{E}_{m-1})=\mu_{H}(\mathscr{E}/\mathscr{E}_{m-1})<\mu_{H}(\mathscr{E}_{1})$, and by the inductive hypothesis $\mu_{H}^{+}(\mathscr{E}_{m-1}/\mathscr{E}_{1})\leq\mu_{H}(\mathscr{E}_{1})$.
    Hence, we have obtained the desired inequality.
  \end{proof}
  
  With $\operatorname{Coh}_{H}^{\beta}(X)$ defined, we can define the slope function for tilt stability.
  
  \begin{definition}
    Fix an ample divisor $H$ on $X$ and $(\beta,\alpha)\in\mathbb{R}\times\mathbb{R}_{>0}$.
    We say $E\in\operatorname{Coh}_{H}^{\beta}(X)$ is \textit{$\mu_{\alpha,\beta}^{\mathrm{tilt}}$-(semi)stable} with respect to $(\beta,\alpha)$ if every subobject $0\to F\to E$ in $\operatorname{Coh}_{H}^{\beta}(X)$ satisfies $\mu_{\alpha,\beta}^{\mathrm{tilt}}(F)(\leq)<\mu_{\alpha,\beta}^{\mathrm{tilt}}(E/F)$ where
    \[
      \mu_{\alpha,\beta}^{\mathrm{tilt}}(E)=
      \begin{cases}
        \frac{\operatorname{ch}_{2}(E)-\beta\operatorname{deg}_{H}(E)+\left(\frac{\beta^{2}}{2}-\frac{\alpha^{2}}{2}\right)\operatorname{rank}(E)}{\operatorname{deg}_{H}(E)-\beta\operatorname{rank}(E)}: & \operatorname{deg}_{H}(E)- \beta\operatorname{rank}(E)\neq 0 \\
        +\infty: & \operatorname{deg}_{H}(E)-\beta\operatorname{rank}(E)=0.
      \end{cases}
    \]
  \end{definition}
  
  In fact, $\mu_{\alpha,\beta}^{\mathrm{tilt}}$ is known to be a \textit{Bridgeland stability condition}.
  Furthermore, using a variant of the Harder-Narasimhan filtration one can define $\mu_{\alpha,\beta}^{\mathrm{tilt}\pm}$ similarly to Definition \ref{definition:harderNarasimhan}.
  The functions $\mathbb{R}\times\mathbb{R}_{>0}\to\mathbb{R}\cup\{+\infty\}$ given by $(\beta,\alpha)\mapsto \mu_{\alpha,\beta}^{\mathrm{tilt}+}(E)$ and $\mu_{\alpha,\beta}^{\mathrm{tilt}-}(E)$ are continuous for all $E\in D^{b}(X)$ (where $\mathbb{R}\times\mathbb{R}_{>0}$ has the usual topology).
  For this reason, by letting $(\beta,\alpha)\in\mathbb{R}\times\mathbb{R}_{>0}$ vary, we view $\mathbb{R}\times\mathbb{R}_{>0}$ as a parameter space of tilt stability.

  \begin{definition}
    \label{definition:hDSlice}
    The family $\mathbb{R}\times\mathbb{R}_{>0}$ parameterizing $\mu_{\alpha,\beta}^{\mathrm{tilt}}$-stability is called the \textit{$(\alpha,\beta)$-plane}.
  \end{definition}
  
  The $(\alpha,\beta)$-plane is also called the $H$-slice.
  Also, the notation of the $(\alpha,\beta)$-plane is a little misleading: following convention $\alpha$ denotes the vertical axis while $\beta$ denotes the horizontal axis.

  We end this subsection by recalling Bridgeland's Large Volume Limit.
  This is one of the main tools for relating $\mu_{H}$-stable and $(H,-\frac{K_{X}}{2})$-Giesker stable sheaves to $\mu_{\alpha,\beta}^{\mathrm{tilt}}$-stable objects.
  
  \begin{lemma}[Large Volume Limit \cite{bridgeland2008}*{Proposition 14.2}]
    \label{lemma:largeVolumeLimit}
    Assume $\mathscr{E}\in\operatorname{Coh}(X)$ is torsion-free.
    \begin{enumerate}
      \item{
        Fix $\beta<\mu_{H}^{-}(\mathscr{E})$.
        $\mathscr{E}$ is $(H,-\frac{K_{X}}{2})$-Gieseker stable if and only if $\mathscr{E}$ is $\mu_{\alpha,\beta}^{\mathrm{tilt}}$-stable for all $\alpha\gg 0$.
      }
      \item{
        Fix $\beta>\mu_{H}^{+}(\mathscr{E})$
        If $\mathscr{E}[1]$ is $\mu_{\alpha,\beta}^{\mathrm{tilt}}$-stable for all $\alpha\gg 0$ then $\mathscr{E}$ is $\mu_{H}$-stable.
      }
      \item{
        Fix $\beta>\mu_{H}^{+}(\mathscr{E})$.
        If $\mathscr{E}$ is locally free and $\mu_{H}$-stable then $\mathscr{E}[1]$ is $\mu_{\alpha,\beta}^{\mathrm{tilt}}$-stable for all $\alpha\gg 0$.
      }
    \end{enumerate}
  \end{lemma}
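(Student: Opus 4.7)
The common engine for all three parts is the asymptotic behavior of the tilt slope as $\alpha\to\infty$. Writing
\[
  \mu_{\alpha,\beta}^{\mathrm{tilt}}(E)=
  \frac{H\cdot\operatorname{ch}_{2}(E)-\beta\operatorname{deg}_{H}(E)+\frac{\beta^{2}-\alpha^{2}}{2}\operatorname{rank}(E)}{\operatorname{deg}_{H}(E)-\beta\operatorname{rank}(E)},
\]
the numerator is a quadratic in $\alpha$ with leading coefficient $-\operatorname{rank}(E)/2$ and the denominator is independent of $\alpha$. Cross multiplying two such expressions (keeping track of signs of denominators) reduces $\mu_{\alpha,\beta}^{\mathrm{tilt}}(F)<\mu_{\alpha,\beta}^{\mathrm{tilt}}(G)$ to a polynomial inequality in $\alpha$. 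The $\alpha^{2}$-coefficient of that polynomial is a nonzero multiple of $\operatorname{rank}(G)\operatorname{deg}_{H}(F)-\operatorname{rank}(F)\operatorname{deg}_{H}(G)$, so the comparison for $\alpha\gg 0$ is governed by $\mu_{H}$; when the $\mu_{H}$'s agree, the subleading term takes over, and a direct Hirzebruch--Riemann--Roch computation in dimension two shows that this subleading term agrees up to positive scaling with the comparison of the reduced twisted Hilbert polynomials $G_{H}(-)$ with twist $D=\frac{K_{X}}{2}$ (the twist $K_{X}/2$ is precisely what cancels the degree-one Todd contribution). This is the numerical bridge between tilt and $(H,\frac{K_{X}}{2})$-Gieseker stability.

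For part (1), since $\beta<\mu_{H}^{-}(\mathscr{E})$ we have $\mathscr{E}\in\mathcal{T}_{H}^{\beta}(X)\subseteq\operatorname{Coh}_{H}^{\beta}(X)$. For the forward direction, take a short exact sequence $0\to F\to\mathscr{E}\to G\to 0$ in $\operatorname{Coh}_{H}^{\beta}(X)$ and extract the long exact sequence of sheaves, which yields $\mathscr{H}^{-1}(F)=0$ and a four-term sequence $0\to\mathscr{H}^{-1}(G)\to\mathscr{H}^{0}(F)\to\mathscr{E}\to\mathscr{H}^{0}(G)\to 0$. Letting $\mathscr{F}'$ be the image of $\mathscr{H}^{0}(F)\to\mathscr{E}$, apply $(H,\frac{K_{X}}{2})$-Gieseker stability to $\mathscr{F}'\subseteq\mathscr{E}$ and combine with the numerical identity above and with the fact that $\mathscr{H}^{-1}(G)\in\mathcal{F}_{H}^{\beta}(X)$ forces $\operatorname{deg}_{H}(\mathscr{H}^{-1}(G))\leq\beta\operatorname{rank}(\mathscr{H}^{-1}(G))$, yielding the desired strict inequality for $\alpha\gg 0$. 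For the converse, a Gieseker-destabilizing coherent subsheaf $\mathscr{F}\subseteq\mathscr{E}$ automatically lies in $\mathcal{T}_{H}^{\beta}(X)$ (since $\mu_{H}^{-}(\mathscr{F})\geq\mu_{H}^{-}(\mathscr{E})>\beta$) and so defines a subobject in the heart that destabilizes $\mathscr{E}$ in tilt for $\alpha\gg 0$ by the same asymptotic comparison.

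For part (2), $\beta>\mu_{H}^{+}(\mathscr{E})$ places $\mathscr{E}[1]\in\mathcal{F}_{H}^{\beta}(X)[1]\subseteq\operatorname{Coh}_{H}^{\beta}(X)$. If $\mathscr{E}$ fails to be $\mu_{H}$-stable, Lemma~\ref{lemma:maximalDestabilizingSubobjectIsInHeart} produces a coherent subsheaf $\mathscr{F}\subseteq\mathscr{E}$ with $\mu_{H}(\mathscr{F})\geq\mu_{H}(\mathscr{E})$ such that $0\to\mathscr{F}[1]\to\mathscr{E}[1]$ is injective in the heart; the leading $\alpha^{2}$ comparison then gives $\mu_{\alpha,\beta}^{\mathrm{tilt}}(\mathscr{F}[1])\geq\mu_{\alpha,\beta}^{\mathrm{tilt}}(\mathscr{E}[1]/\mathscr{F}[1])$ for $\alpha\gg 0$, contradicting tilt stability. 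For part (3), suppose for contradiction some sequence of destabilizers $F_{n}\hookrightarrow\mathscr{E}[1]$ exists for arbitrarily large $\alpha$. The triangle $F_{n}\to\mathscr{E}[1]\to G_{n}$ yields a cohomology sequence $0\to\mathscr{H}^{-1}(F_{n})\to\mathscr{E}\to\mathscr{H}^{-1}(G_{n})\to\mathscr{H}^{0}(F_{n})\to 0$ with $\mathscr{H}^{0}(G_{n})=0$. Reflexivity of $\mathscr{E}$ prevents $\mathscr{H}^{0}(F_{n})$ from creating obstructions (any torsion quotient of $\mathscr{E}$ would lower depth), reducing to the case that $\mathscr{H}^{-1}(F_{n})\subseteq\mathscr{E}$ is a proper coherent subsheaf. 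The leading $\alpha^{2}$-comparison combined with $\mu_{H}$-stability of $\mathscr{E}$ then forces a contradiction for $\alpha\gg 0$.

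The main obstacle is the case analysis in part (3): one must rule out rank-zero situations (where denominators vanish and the tilt slope becomes $+\infty$) and control the torsion part $\mathscr{H}^{0}(F_{n})$. This is precisely where reflexivity enters, ensuring the nonexistence of the pathological subobjects whose cohomology sheaves would otherwise evade the asymptotic comparison.
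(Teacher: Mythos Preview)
The paper does not actually prove this lemma: it is stated with a citation to \cite{bridgeland2008}*{Proposition 14.2} and no argument is supplied. There is therefore no in-paper proof to compare your proposal against.

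Taken on its own, your sketch follows the standard large-volume-limit argument and is largely sound. The asymptotic reduction in your first paragraph is the correct engine, and the identification of the subleading term with the $(H,\tfrac{K_X}{2})$-Gieseker comparison via Hirzebruch--Riemann--Roch (the twist $K_X/2$ cancelling the degree-one Todd contribution) is exactly how part~(1) works. Part~(2) is fine once you invoke Lemma~\ref{lemma:maximalDestabilizingSubobjectIsInHeart}.

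Part~(3) is where your sketch is too thin. The sentence ``Reflexivity of $\mathscr{E}$ prevents $\mathscr{H}^{0}(F_{n})$ from creating obstructions (any torsion quotient of $\mathscr{E}$ would lower depth)'' does not isolate the actual mechanism. The real danger is a zero-dimensional sheaf $T\in\operatorname{Coh}_H^{\beta}(X)$: it has $\mu_{\alpha,\beta}^{\mathrm{tilt}}(T)=+\infty$ for all $(\beta,\alpha)$, so any nonzero injection $T\hookrightarrow\mathscr{E}[1]$ in the heart destabilizes $\mathscr{E}[1]$ for every $\alpha$. Such an injection is a nonzero element of $\operatorname{Ext}^{1}(T,\mathscr{E})$; on a surface reflexive means locally free, and then $\operatorname{Ext}^{1}(T,\mathscr{E})=0$ for zero-dimensional $T$. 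That is the precise role of reflexivity, and it is not captured by ``lower depth.'' You should also say explicitly why, once zero-dimensional subobjects are excluded, any remaining destabilizer $F$ has $\operatorname{deg}_H(F)-\beta\operatorname{rank}(F)>0$ (so the asymptotic comparison is valid) and why $\mathscr{H}^{-1}(F)\subsetneq\mathscr{E}$ combined with $\mu_H$-stability forces the $\alpha^2$-coefficient to have the sign you need.
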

  
\subsection*{Walls for Tilt Stability}
  In this subsection we discuss the wall and chamber structures of the $(\alpha,\beta)$-plane.
  Namely, for fixed object $E\in D^{b}(X)$, there is a well-behaved collection of real $1$ dimensional submanifolds, called walls, of the $(\alpha,\beta)$-plane where $\mu_{\alpha,\beta}^{\mathrm{tilt}}$-stability of $E$ can only change when crossing a wall.
  
  We begin by describing numerical walls.
  Numerical walls are a collection of real $1$ dimensional submanifolds of the $(\alpha,\beta)$-plane with purely numerical description.
  Importantly, numerical walls contain all actual walls so any restrictions on numerical walls also constrain the collection of actual walls.
  
  \begin{definition}
    A \textit{numerical wall} associated $E\in D^{b}(X)$ is the set of points in the $(\alpha,\beta)$-plane of the form:
    \[
      W(E,F)=\{(\beta,\alpha)\in\mathbb{R}\times\mathbb{R}_{>0}\mid\mu_{\alpha,\beta}^{\mathrm{tilt}}(E)=\mu_{\alpha,\beta}^{\mathrm{tilt}}(F)\}
    \]
    for some object $F\in D^{b}(X)$.
  \end{definition}
  
  Numerical walls in the $(\alpha,\beta)$-plane are constrained. 
  The following result states that in reasonable cases numerical walls consists of a unique vertical line and nested semicircles on each side of that vertical line.

  \begin{lemma}[\cite{maciocia2014}]
    \label{lemma:wallsOfTiltStability}
    Fix an object $E\in D^{b}(X)$.
    \begin{enumerate}
      \item{
        $W(E,F)$ is given by the equation:
        \[
          x\alpha^2+x\beta^2-2y\beta+2z=0
        \]
        where
        \begin{align*}
          x &=\operatorname{deg}_{H}(E)\operatorname{rank}(F)-\operatorname{deg}_{H}(F)\operatorname{rank}(E)\\
          y &=\operatorname{ch}_{2}(E)\operatorname{rank}(F)-\operatorname{ch}_{2}(F)\operatorname{rank}(E)\\
          z &=\operatorname{ch}_{2}(E)\operatorname{deg}_{H}(F)-\operatorname{ch}_{2}(F)\operatorname{deg}_{H}(E)
        \end{align*}
        
        In particular,
        \begin{itemize}
          \item{
            if $x\neq 0$ then $W(E,F)$ is a semicircle centered at $(y/x,0)$ with radius squared $y^2/x^2-2z/x$
          }
          \item{
            if $x=0$ and $y\neq 0$ then $W(E,F)$ is the vertical line described by $\beta=-z/y$,
          }
          \item{
            if $x,y=0$ then $W(E,F)=\varnothing$ or $W(E,F)=\mathbb{R}\times\mathbb{R}_{>0}$
          }
        \end{itemize}
      }
      \item{
        If $\operatorname{rank}(E)\neq 0$ and $\mu_{H}(E)\neq\mu_{H}(F)$ then $W(E,F)$ is a semicircle with radius squared
        \[
          \rho^{2}=(\mu_{H}(E)-c)^2-\frac{\overline{\Delta}_{H}(E)}{\operatorname{rank}(E)^{2}}
        \]
        where $(c,0)$ is the center of the semicircle.
        Recall $\overline{\Delta}_{H}(E)$ is the $H$-discriminant of Definition \ref{definition:hDDiscriminant}.
      }
      \item{
        If $\operatorname{rank}(E)\neq 0$ then there is a unique vertical numerical wall given by the equation $\beta=\mu_{H}(E)$.
      }
      \item{
        If $\overline{\Delta}_{H}(E)\geq 0$ then any two numerical walls are disjoint in the $(\alpha,\beta)$-plane.
      }
    \end{enumerate}
  \end{lemma}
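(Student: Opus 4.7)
My plan is to prove all four parts by direct algebraic computation from the definition of $\mu^{\mathrm{tilt}}_{\alpha,\beta}$, with the nested-wall argument in part~(4) being the only step that requires a real geometric idea. Throughout, I abbreviate $a = H^{n-2}\cdot\operatorname{ch}_2(E)$, $b = \deg_H(E)$, $r = \operatorname{rank}(E)$, and similarly with primes for $F$, so that $x = br' - b'r$, $y = ar' - a'r$, $z = ab' - a'b$.

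For part~(1), I would start from $\mu^{\mathrm{tilt}}_{\alpha,\beta}(E) = \mu^{\mathrm{tilt}}_{\alpha,\beta}(F)$, cross-multiply, and expand. Writing $c = (\beta^2 - \alpha^2)/2$, one checks that the mixed terms $\beta b b'$ and $\beta c r r'$ cancel between the two sides, leaving $\beta^2 x - \beta y + z - cx = 0$. Multiplying by $2$ and substituting $2c = \beta^2 - \alpha^2$ collapses this to the desired equation $\alpha^2 x + \beta^2 x - 2y\beta + 2z = 0$. Completing the square in $\beta$ when $x \neq 0$ reads off the circle centered at $(y/x,0)$ with radius squared $y^2/x^2 - 2z/x$; the degenerate cases $x=0, y\neq 0$ and $x=y=0$ follow directly from what the equation becomes.

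Part~(2) is then an identity: substitute $c = y/x$, expand $(\mu_H(E) - y/x)^2 - (y^2/x^2 - 2z/x)$, and check that it simplifies to $\overline{\Delta}_H(E)/r^2$ using $\overline{\Delta}_H(E) = b^2 - 2ar$. The point is that every term involving the primed invariants collects into a factor of $(br' - b'r)$, which cancels against $x$ in the denominator. Part~(3) is similarly routine: if $r \neq 0$ then $x = 0$ is equivalent to $\mu_H(F) = \mu_H(E)$ (or to $F$ being torsion of degree zero), and in either case a direct substitution shows the ratio $z/y$ equals $\mu_H(E)$, so the unique vertical wall is $\beta = \mu_H(E)$.

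The main obstacle is part~(4). Here I would use part~(2) to write every wall $W(E,F)$ in the form $\rho^2 = (\mu_H(E) - c)^2 - \overline{\Delta}_H(E)/r^2$. Since $\overline{\Delta}_H(E) \geq 0$, this forces $|\mu_H(E) - c| \geq \rho$, i.e.\ the point $(\mu_H(E),0)$ lies outside or on every such semicircle, so no wall crosses the vertical line $\beta = \mu_H(E)$ in the open upper half-plane. Walls with centers on opposite sides of $\mu_H(E)$ are therefore trivially disjoint. For two walls with centers $c_1 < c_2 < \mu_H(E)$ I would compute
\[
(\rho_1+\rho_2)(\rho_1-\rho_2) = \rho_1^2 - \rho_2^2 = \bigl(2\mu_H(E) - c_1 - c_2\bigr)(c_2 - c_1)
\]
and use $(\mu_H(E) - c_i) \geq \rho_i$ to conclude $\rho_1 - \rho_2 \geq c_2 - c_1$, which is exactly the condition that the smaller circle lies inside the larger; hence the two semicircles are nested and disjoint (or coincide). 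The case $\mu_H(E) < c_1 < c_2$ is symmetric, and together these cases prove the disjointness of all numerical walls.
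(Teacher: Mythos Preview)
The paper does not give its own proof of this lemma; it is stated with a citation to \cite{maciocia2014} and used as a black box. So there is no in-paper argument to compare against, and your write-up stands on its own. Your computations in parts (1)--(3) are correct and are exactly the standard derivation: cross-multiplying the tilt slopes, cancelling the symmetric terms, and then reading off the conic; the identity in (2) indeed reduces to $rz - by = -ax$ after one line of algebra, which gives $(\mu_H(E)-c)^2 - \rho^2 = \overline{\Delta}_H(E)/r^2$ as you claim. (Incidentally, your computation also shows the vertical wall is $\beta = z/y$, so the sign in the displayed $\beta = -z/y$ in the statement is a typo, not an error on your part.)

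Your argument for (4) is the right idea and is essentially Maciocia's: the relation $\rho^2 = (\mu_H(E)-c)^2 - \overline{\Delta}_H(E)/r^2$ forces all semicircular walls to lie on one side of the vertical line $\beta = \mu_H(E)$, and then the difference-of-squares trick shows walls on the same side are nested. Two small points are worth tightening. First, your proof of (4) invokes (2), which assumes $\operatorname{rank}(E)\neq 0$; when $r=0$ (and $b\neq 0$) one checks directly from (1) that every semicircular wall has center $c = y/x = a/b$ independent of $F$, so all walls are concentric and hence nested, while the case $r=b=0$ gives only vertical (parallel) walls. Second, when you divide by $\rho_1+\rho_2$ you should remark that a wall with $\rho\le 0$ is empty in the open half-plane, so one may assume $\rho_1,\rho_2>0$; and the equality case $\rho_1-\rho_2 = c_2-c_1$ gives internal tangency at a point on the $\beta$-axis, hence still disjoint in $\alpha>0$. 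With those remarks added, your proof is complete and matches the standard one.
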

  
  If $\overline{\Delta}_{H}(E)<0$ then numerical walls are not disjoint.
  To explain, if $\operatorname{rank}(E)\neq 0$ and $\overline{\Delta}_{H}(E)<0$ then every semicircular wall intersects the line $\beta=\mu_{H}(E)$.
  Furthermore, numerical walls may intersect along the line $\alpha=0$ in the $(\alpha,\beta)$-plane.
  For example, any numerical wall in the $(\alpha,\beta)$-plane associated to $\mathscr{O}_{X}$ intersects the point $(0,0)$.
  However, if $\overline{\Delta}_{H}(E)>0$ then numerical walls will not even intersect along the line $\alpha=0$.
  
  Actual walls are numerical walls associated to $E$ where $\mu_{\alpha,\beta}^{\mathrm{tilt}}$-stability may change.
  In other words, $\mu_{\alpha,\beta}^{\mathrm{tilt}}$-stability (or unstability) is independent of $(\beta,\alpha)$ within chambers. 
  
  \begin{definition}
    We say that a numerical wall $W$ is an \textit{actual wall} associated to $E$ if for some $(\beta,\alpha)\in W$ there exists a short exact sequence
    \[
      0\to F\to E\to E/F\to 0
    \]
    in $\operatorname{Coh}_{H}^{\beta }(X)$ such that 
    \begin{itemize}
      \item{
        $W=W(E,F)$,
      }
      \item{
        $F$, $E$, and $E/F$ are $\mu_{\alpha,\beta}^{\mathrm{tilt}}$-semistable (in particular, $F,E,E/F\in\operatorname{Coh}_{H}^{\beta}(X)$),
      }
      \item{
        $\mu_{\alpha,\beta}^{\mathrm{tilt}}(F)=\mu_{\alpha,\beta}^{\mathrm{tilt}}(E)=\mu_{\alpha,\beta}^{\mathrm{tilt}}(E/F)$, and
      }
      \item{
        $W\neq\mathbb{R}\times\mathbb{R}_{>0}$.
      }
    \end{itemize}
    
    We call $0\to F\to E\to E/F\to 0$ a destabilizing sequence associated to $W$ or a destabilizing sequence associated to $E$.
  \end{definition}
  
  In fact, a destabilizing sequence associated to $\mathscr{E}$ or $\mathscr{E}[1]$ is independent of $(\beta,\alpha)\in W$.
  A proof of this fact in the case of $\mathbb{P}^{2}$ and $\operatorname{ch}_{2}(E)<0$ appears in \cite{arcaraB2013}*{Lemma 6.3}, but the same argument holds in the general case~\cite{kopper2020}*{Lemma 3.1}.
  
  Bridgeland showed for fixed $E\in D^{b}(X)$ that the collection of actual walls is locally finite and stability can only change when crossing a wall.
  
  \begin{lemma}[\cite{bridgeland2008}*{Section 9}]
    \label{lemma:stabilityIsConstantInChambers}
    Assume $E\in D^{b}(X)$ is nonzero.
    Let $\mathcal{W}$ be the collection of all actual walls associated to $E$ in the $(\alpha,\beta$)-slice.
    \begin{enumerate}
      \item{
        The collection $\mathcal{W}$ is locally finite.
        In other words, for any compact subset $V\subseteq\mathbb{R}\times\mathbb{R}_{>0}$, $V$ intersects finitely many walls in $\mathcal{W}$.
      }
      \item{
        If $E$ is $\mu_{\alpha_{0},\beta_{0}}^{\mathrm{tilt}}$-stable for some $(\beta_{0},\alpha_{0})$ in $(\mathbb{R}\times\mathbb{R}_{>0})\setminus(\bigcup_{W\in\mathcal{W}} W)$ then $E$ is $\mu_{\alpha_{0},\beta_{0}}^{\mathrm{tilt}}$-stable for all $(\beta,\alpha)$ in the connected component in $(\mathbb{R}\times\mathbb{R}_{>0})\setminus(\bigcup_{W\in\mathcal{W}} W)$ containing $(\beta_{0},\alpha_{0})$.
        
        In other words, tilt stability is constant in chambers.
      }
    \end{enumerate}
  \end{lemma}
  
  Using \cite{coskun2016}*{Proposition 8.3} and a variant of \cite{kopper2020}*{Theorem 3.3} the largest actual wall associated to $\mathscr{E}$ or $\mathscr{E}[1]$ in $D^{b}(X)$ can be effectively bounded:
  \begin{lemma}
    \label{lemma:boundOnLargestWall}
    Let $\mathscr{E}$ be a torsion-free sheaf with $\overline{\Delta}_{H}(\mathscr{E})\geq 0$.
    Assume $W$ is an actual semicircular wall associated to $\mathscr{E}$ (resp. $\mathscr{E}[1]$) with center $(c,0)$, radius $\rho$, and destabilizing sequence $0\to F\to\mathscr{E}\to G\to 0$ (resp. $0\to F\to\mathscr{E}[1]\to G\to 0$).
    \begin{enumerate}
      \item{
        If $\mathscr{H}^{-1}(G)\neq 0$ (resp. $\operatorname{rank}(\mathscr{H}^{0}(F))\neq 0$) then
        \[
          \rho\leq\sqrt{\frac{\overline{\Delta}_{H}(\mathscr{E})}{4(\operatorname{rank}(\mathscr{E})+1)}}
        \]
        where $W$ is empty if $\overline{\Delta}_{H}(\mathscr{E})=0$.
      }
      \item{
        If $\mathscr{H}^{-1}(G)=0$ (resp. $\operatorname{rank}(\mathscr{H}^{0}(F))=0$) then
        \[
          \rho\leq\frac{1}{2}\left|\overline{\Delta}_{H}(\mathscr{E})-\frac{1}{\operatorname{rank}(\mathscr{E})^{2}}\right|.
        \]
        where $W$ is empty if $\overline{\Delta}_{H}(\mathscr{E})=0$ or $\overline{\Delta}_{H}(\mathscr{E})=1$ with $\operatorname{rank}(\mathscr{E})=1$.
      }
    \end{enumerate}
  \end{lemma}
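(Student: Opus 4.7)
The plan is to combine the tilt-semistability of the pieces of the destabilizing sequence along $W$ with the Bogomolov inequality (valid on Del Pezzos in arbitrary characteristic by the discussion after Definition \ref{definition:hDDiscriminant}) and the explicit radius formula from Lemma \ref{lemma:wallsOfTiltStability}(2). The two cases correspond respectively to the types of bounds produced in \cite{coskun2016}*{Proposition 8.3} and \cite{kopper2020}*{Theorem 3.3}, and the bulk of the work is in transferring those arguments to the present setting (in particular, to the shifted case $\mathscr{E}[1]$).

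First, by Lemma \ref{lemma:endpointsOfWalls} we may assume the destabilizing sequence holds throughout $W$ and that both $F$ and $G$ are $\mu_{\alpha,\beta}^{\mathrm{tilt}}$-semistable there. Taking cohomology in $\operatorname{Coh}(X)$ of the sequence $0\to F\to\mathscr{E}\to G\to 0$ gives
\[
  0\to\mathscr{H}^{-1}(G)\to F\to\mathscr{E}\to\mathscr{H}^{0}(G)\to 0,
\]
with $F=\mathscr{H}^{0}(F)$ torsion-free (as $\mathscr{H}^{-1}(F)\hookrightarrow\mathscr{H}^{-1}(\mathscr{E})=0$); the sequence $0\to F\to\mathscr{E}[1]\to G\to 0$ produces an analogous four-term exact sequence in which $\mathscr{H}^{0}(F)$ plays the role that $\mathscr{H}^{-1}(G)$ played before.

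In Case 2, where the relevant cohomology sheaf vanishes, the analysis reduces to an honest short exact sequence of torsion-free coherent sheaves, and applying Bogomolov to both pieces yields $\overline{\Delta}_{H}(F),\overline{\Delta}_{H}(G)\geq 0$. Substituting these into the radius formula of Lemma \ref{lemma:wallsOfTiltStability}(2) and optimizing over integer rank decompositions $\operatorname{rank}(\mathscr{E})=\operatorname{rank}(F)+\operatorname{rank}(G)$ yields the stated linear bound, with the $1/\operatorname{rank}(\mathscr{E})^{2}$ correction arising from the extremal rank-one decomposition as in \cite{kopper2020}*{Theorem 3.3}. In Case 1, the nonvanishing forces an integer rank improvement (effectively $\operatorname{rank}(F)\geq\operatorname{rank}(\mathscr{E})+1$ in the unshifted setting, with the analogous bound on the quotient in the shifted setting), which when fed into the Bogomolov inequality for $F$ and the wall-radius formula as in \cite{coskun2016}*{Proposition 8.3} yields the sharper square-root bound.

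The main obstacle is the shifted case $\mathscr{E}[1]$, which is not literally treated in either cited source since both phrase their results for destabilizing sequences of a torsion-free sheaf rather than its homological shift. Adapting them requires checking that each ingredient---the numerical wall equation, the cohomology long exact sequence, the integrality of ranks, and the Bogomolov inequality---transforms correctly under the shift; this is a bookkeeping exercise requiring care with signs in the Chern characters but no new conceptual input. A secondary subtlety is verifying the ``emptiness'' claims at the boundary values $\overline{\Delta}_{H}(\mathscr{E})=0$ and $\overline{\Delta}_{H}(\mathscr{E})=1$ with $\operatorname{rank}(\mathscr{E})=1$, which must be read off from the case where all Bogomolov inequalities are simultaneously saturated.
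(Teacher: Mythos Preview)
Your overall architecture matches the paper's: take cohomology of the destabilizing sequence to get a four-term exact sequence in $\operatorname{Coh}(X)$, invoke \cite{coskun2016}*{Proposition 8.3} for Case~(1), and adapt \cite{kopper2020}*{Theorem 3.3} for Case~(2). The paper likewise dismisses the shifted case with ``the argument for $\mathscr{E}[1]$ is essentially the same,'' and handles $\overline{\Delta}_{H}(\mathscr{E})=0$ by citing \cite{bayer2016}*{Proposition A.8} rather than a saturation analysis.

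However, your sketch of Case~(2) misidentifies the mechanism. You propose to use Bogomolov on both $F$ and $G$, feed $\overline{\Delta}_{H}(F),\overline{\Delta}_{H}(G)\geq 0$ into the radius formula, and ``optimize over integer rank decompositions,'' attributing the $1/\operatorname{rank}(\mathscr{E})^{2}$ term to an ``extremal rank-one decomposition.'' That is not where it comes from, and Bogomolov on the pieces plays no role in the paper's Case~(2). The paper's argument is: by the seesaw inequality one of $\mu_{H}(\mathscr{H}^{0}(F)),\mu_{H}(\mathscr{H}^{0}(G))$ is at most $\mu_{H}(\mathscr{E})$; since the wall is semicircular the inequality is strict; and since slopes are ratios of integers with denominator at most $\operatorname{rank}(\mathscr{E})$, the gap is at least $1/\operatorname{rank}(\mathscr{E})^{2}$. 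Heart membership along the entire wall (Lemma \ref{lemma:endpointsOfWalls}) then forces $c+\rho\leq\mu_{H}(\mathscr{H}^{0}(F))\leq\mu_{H}(\mathscr{E})-1/\operatorname{rank}(\mathscr{E})^{2}$, and plugging this right-endpoint constraint into the radius relation of Lemma \ref{lemma:wallsOfTiltStability}(2) yields the bound directly. So the $1/\operatorname{rank}(\mathscr{E})^{2}$ is a \emph{slope}-integrality gap, not a rank-decomposition extremum, and the endpoint argument (which you do not mention) is the crux. A minor related slip: in Case~(2) the quotient $\mathscr{H}^{0}(G)$ need not be torsion-free, so the sequence is not ``of torsion-free coherent sheaves'' as you state---though this does not affect the argument once one uses the correct mechanism.
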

  \begin{proof}
    We only consider a semicircular wall associated to $\mathscr{E}$: the argument for $\mathscr{E}[1]$ is essentially the same.
    Furthermore, if $\overline{\Delta}_{H}(\mathscr{E})=0$ then $W$ is empty \cite{bayer2016}*{Proposition A.8}, so we may assume $\overline{\Delta}_{H}(\mathscr{E})>0$.
    Taking cohomology of the destabilizing sequence gives the following exact sequence in $\operatorname{Coh}(X)$:
    \[
      0\to\mathscr{H}^{-1}(G)\to\mathscr{H}^{0}(F)\to\mathscr{E}\to\mathscr{H}^{0}(G)\to 0.
    \]
    \begin{enumerate}
      \item{
        If $\mathscr{H}^{-1}(G)\neq 0$, by \cite{coskun2016}*{Proposition 8.3},
        \[
          \rho^{2}\leq \frac{\overline{\Delta}_{H}(\mathscr{E})}{4(\operatorname{rank}(\mathscr{E})+1)},
        \]
        as claimed.
      }
      \item{
        Assume $\mathscr{H}^{-1}(G)=0$.
        In this case, we give slight simplification of \cite{kopper2020}*{Theorem 3.3}.
        Since $\mathscr{H}^{-1}(G)=0$, we have the following exact sequence in $\operatorname{Coh}(X)$:
        \[
          0\to\mathscr{H}^{0}(F)\to\mathscr{E}\to\mathscr{H}^{0}(G)\to 0.
        \]
        By the seesaw inequality~\cite{rudakov1997}*{Lemma 3.2}, one of the following inequalities must hold:
        \begin{itemize}
          \item{
            $\mu_{H}(\mathscr{H}^{0}(F))\leq\mu_{H}(\mathscr{E})$
          }
          \item{
            $\mu_{H}(\mathscr{H}^{0}(G))\leq\mu_{H}(\mathscr{E})$.
          }
        \end{itemize}
        
        First suppose $\mu_{H}(\mathscr{H}^{0}(F))\leq \mu_{H}(\mathscr{E})$.
        Since $W$ is a semicircular wall, by (1) of Lemma \ref{lemma:wallsOfTiltStability}, $\mu_{H}(\mathscr{H}^{0}(F))\neq\mu_{H}(\mathscr{E})$.
        In other words, $\mu_{H}(\mathscr{H}^{0}(F))< \mu_{H}(\mathscr{E})$ so
        \[
          \mu_{H}(\mathscr{H}^{0}(F))+\frac{1}{\operatorname{rank}(\mathscr{E})^{2}}\leq\mu_{H}(\mathscr{E}).
        \]
        Therefore, by \cite{kopper2020}*{Lemma 3.1},
        \[
          c+\rho\leq\mu_{H}(\mathscr{H}^{0}(F))\leq\mu_{H}(\mathscr{E})-\frac{1}{\operatorname{rank}(\mathscr{E})^{2}}
        \]
        We obtain the same inequality in the case of $\mu_{H}(\mathscr{H}^{0}(G))\leq\mu_{H}(\mathscr{E})$.
        
        In other words, $W$ is contained in the semicircle with right endpoint $\mu_{H}(\mathscr{E})-\frac{1}{\operatorname{rank}(\mathscr{E})^{2}}$ whose center and radius satisfy the (2) of Lemma \ref{lemma:wallsOfTiltStability}.
         In other words,
         \[
           \rho\leq \frac{1}{2}\left|\overline{\Delta}_{H}(\mathscr{E})-\frac{1}{\operatorname{rank}(\mathscr{E})^{2}}\right|,
         \]
         as claimed.
      }
    \end{enumerate}
  \end{proof}
  
  In practice, we do not distinguish the cases in Lemma \ref{lemma:boundOnLargestWall}.
  For this reason, we introduce the following immediate corollary.
  \begin{corollary}
    \label{corollary:boundIndependentOfCases}
    Suppose $\mathscr{E}$ is a torsion-free sheaf with $\overline{\Delta}_{H}(\mathscr{E})$.
    If $W$ is an actual semicircular wall associated to $\mathscr{E}$ or $\mathscr{E}[1]$ with radius $\rho$ then
    \[
      \rho\leq\max\left\{\sqrt{\frac{\overline{\Delta}_{H}(\mathscr{E})}{4(\operatorname{rank}(\mathscr{E})+1)}},\frac{1}{2}\left|\overline{\Delta}_{H}(\mathscr{E})-\frac{1}{\operatorname{rank}(\mathscr{E})^{2}}\right|\right\}.
    \]
    Moreover, if $\overline{\Delta}_{H}(\mathscr{E})=0$ then there are no actual semicircular walls associated to $\mathscr{E}$ or $\mathscr{E}[1]$.
  \end{corollary}
  
  The results discussed above are collected in Figure \ref{figure:actualWalls}.
  
  \begin{figure}
    \centering
    \begin{tikzpicture}
      \draw[fill=gray!20,draw=gray!20] (0,0) -- (0.535898,0) arc (180:0:3.4641) -- (13,0) -- (13,5.5) -- (0,5.5)-- (0,0);
      \draw[<->,dotted] (-1,0) -- (13.5,0) node[right] {$\beta$};
      \draw[->,dotted] (-.5,0) -- (-.5,4.5) node[above] {$\alpha$};
      \draw[very thick] (0,0) -- (0,5.5) node[above] {$\beta=\mu_{H}(\mathscr{E})$};
      \draw[very thick] (0.535898,0) arc (180:0:3.4641);
      \draw[very thick] (0.763932,0) arc (180:0:2.23607);
      \draw[very thick] (1,0) arc (180:0:1.5);
      \draw[very thick] (1.45969,0) arc (180:0:0.640312);
      \draw[very thick] (1.80975,0) arc (180:0:0.20025);
      \draw[densely dashed] (-.5,3.75) -- (5.25,3.75) node[right] {$\alpha=\max\left\{\sqrt{\frac{\overline{\Delta}_{H}(\mathscr{E})}{4(\operatorname{rank}(\mathscr{E})+1)}},\frac{1}{2}\left|\overline{\Delta}_{H}(\mathscr{E})-\frac{1}{\operatorname{rank}(\mathscr{E})^{2}}\right|\right\}$};
      \node[text width=5cm] at (3.5,4.5) {$\mu_{\alpha,\beta}^{\mathrm{tilt}}$-stability of $\mathscr{E}[1]$ is equivalent to $\mu_{H}$-stability of $\mathscr{E}$};
    \end{tikzpicture}
    \caption{
      \label{figure:actualWalls}
      Actual walls associated to $\mathscr{E}[1]$ for $\mathscr{E}\in\operatorname{Coh}(X)$ torsion-free.
      The solid lines represent actual walls of $\mathscr{E}[1]$.
      The dashed line represents our bound on the largest actual wall.
      The shaded region is the large volume limit.
      The actual walls of $\mathscr{E}$ form a similar picture mirrored over $\beta=\mu_{H}(\mathscr{E})$ except $\mu_{\alpha,\beta}^{\mathrm{tilt}}$-stability of $\mathscr{E}$ is equivalent to $(H,-\frac{K_{X}}{2})$-Gieseker stability of $\mathscr{E}$.
    }
  \end{figure}
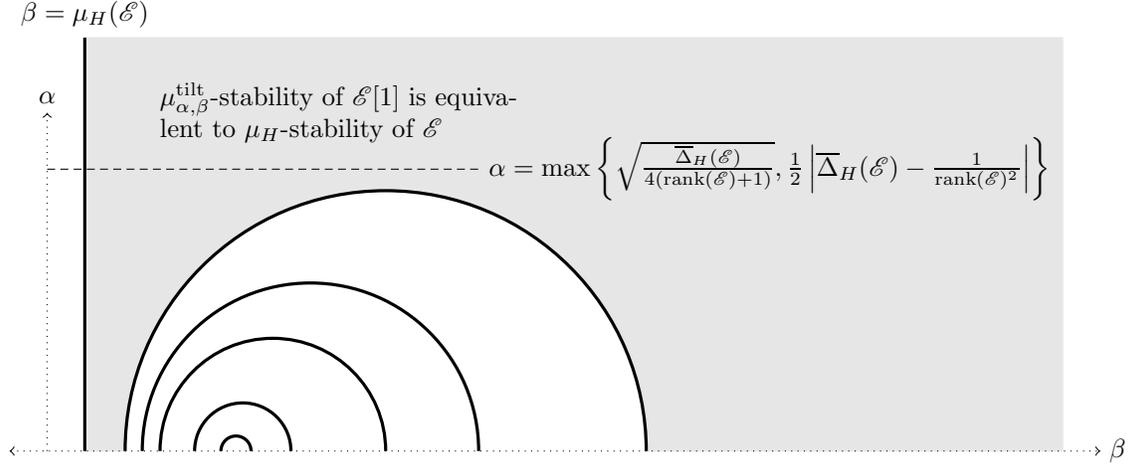
  
  We end this section by recalling a wall-crossing result for the $(\alpha,\beta)$-plane.
  Suppose $W$ is an actual semicircular wall associated to $E\in D^{b}(X)$.
  If $E$ is $\mu_{\alpha,\beta}^{\mathrm{tilt}}$-stable on one side of the semicircular wall then $E$ is $\mu_{\alpha,\beta}^{\mathrm{tilt}}$-unstable on the other side of the semicircular wall.
  The converse is false, $E$ may be $\mu_{\alpha,\beta}^{\mathrm{tilt}}$-unstable on both sides of an actual semicircular wall.
  Bayer and Macr{\`i}'s wall-crossing result gives conditions to guarantee the converse holds.
  In practice, this result is used to construct $\mu_{\alpha,\beta}^{\mathrm{tilt}}$-stable objects.
  Technically, we give the ``dual" version of their result, but the same argument holds.
  
  \begin{lemma}[\cite{bayer2011}*{Lemma 5.9}]
    \label{theorem:wallCrossingOne}
    Suppose $0\to F\to E\to G^{\oplus r}\to 0$ is exact in $\operatorname{Coh}_{H}^{\beta_{0}}(X)$ for some $\beta_{0}\in\mathbb{R}$.
    If there exists $\alpha_{0}>0$ satisfying
    \begin{itemize}
      \item{
        $F$ and $G$ are $\mu_{\alpha_{0},\beta_{0}}^{\mathrm{tilt}}$-stable,
      }
      \item{
        $\mu_{\alpha_{0},\beta_{0}}^{\mathrm{tilt}}(F)=\mu_{\alpha_{0},\beta_{0}}^{\mathrm{tilt}}(G)$,
      }
      \item{
        $\operatorname{Hom}_{D^{b}(X)}(G,E)=0$, and
      }
      \item{
        there exists $\varepsilon>0$ such that $\mu_{\alpha_{0},\beta_{0}}^{\mathrm{tilt}}(F)<\mu_{\alpha_{0},\beta_{0}}^{\mathrm{tilt}}(G)$ for all $\alpha\in (\alpha_{0},\alpha_{0}+\varepsilon)$
      }
    \end{itemize}
    then there exists $\overline{\Delta}_{H}>0$ such that $E$ is $\mu_{\alpha_{0},\beta_{0}}^{\mathrm{tilt}}$-stable for all $\alpha\in (\alpha_{0},\alpha_{0}+\overline{\Delta}_{H})$.
  \end{lemma}

\section{Stability Of Kernel Sheaves}
  \label{section:stabilityOfSyzygyBundles}
  In this section, we apply the theory above to show $\mu_{H}$-stability of kernel sheaves associated to sufficiently positive, $(-K_{X},-\frac{K_{X}}{2})$-Gieseker stable sheaves on Del Pezzo surfaces.
  
  \begin{definition}
    Assume $\mathscr{E}$ is a globally generated, torsion-free sheaf.
    In other words, there is a short exact sequence
    \begin{equation}
      \label{equation:sesForGloballyGenerated}
      0\to\mathscr{M}_{\mathscr{E}}\to H^{0}(\mathscr{E})\otimes\mathscr{O}_{X}\to\mathscr{E}\to 0.
    \end{equation}
    We call $\mathscr{M}_{\mathscr{E}}$ the \textit{kernel sheaf} associated to $\mathscr{E}$.
    If $\mathscr{E}$ is clear from context, we just write $\mathscr{M}$ instead of $\mathscr{M}_{\mathscr{E}}$.
  \end{definition}
  
  Kernel sheaves are also called syzygy sheaves, Lazarsfeld-Mukai sheaves, and Lazarsfeld sheaves.
  
  We provide an overview of this section.
  Our method for proving $\mu_{H}$-stability of $\mathscr{M}$ follows a wall-crossing method that has been used in many recent results~\cites{bayer2018,kopper2020,feyzbakhsh2022}.
  We describe this method in more detail for our scenario.
  By shifting Equation (\ref{equation:sesForGloballyGenerated}) we obtain a distinguished triangle:
  \[
    \mathscr{E}\to\mathscr{M}[1]\to (H^{0}(\mathscr{E})\otimes\mathscr{O}_{X})[1]\to \mathscr{E}[1]
  \]
  in $D^{b}(X)$.
  If $\mathscr{E}$ is $(-K_{X},\frac{K_{X}}{2})$-Giesker stable and sufficiently positive then $W((H^{0}(\mathscr{E})\otimes\mathscr{O}_{X})[1],\mathscr{E})$ is an actual wall associated to $\mathscr{M}[1]$.
  By applying Lemma \ref{theorem:wallCrossingOne}, $\mathscr{M}[1]$ is $\mu_{\alpha,\beta}^{\mathrm{tilt}}$-stable in the region just above the wall $W((H^{0}(\mathscr{E})\otimes\mathscr{O}_{X})[1],\mathscr{E})$.
  
  Once we have $\mu_{\alpha,\beta}^{\mathrm{tilt}}$-stability of $\mathscr{M}[1]$, the techniques for proving $\mu_{H}$-stability of $\mathscr{M}$ tends to diverge.
  For example \cite{bayer2018} is able to apply the Large Volume Limit (Lemma \ref{lemma:largeVolumeLimit}) directly.
  In contrast, \cite{kopper2020} uses the fact that for a torsion sheaf $\mu_{\alpha,\beta}^{\mathrm{tilt}}$-stability is equivalent to $\mu_{H}$-stability along its support (\cite{kopper2020}*{Lemma 2.6}).
  In comparison, \cite{feyzbakhsh2022}*{Theorem 1.3} fundamentally uses the geometry of $K3$ surfaces.
  
  In our scenario, we show $\mu_{\alpha,\beta}^{\mathrm{tilt}}$-stability of $\mathscr{M}[1]$ constrains the degree and second Chern character of a maximal $\mu_{H}$-destabilizing subsheaf of $\mathscr{M}$.
  In the case of Del Pezzo surfaces, we then use these constraints to show $\mathscr{M}$ is $\mu_{H}$-stable.
  
  \begin{lemma}
     \label{lemma:LazarsfeldStableAboveWall}
    Let $\mathscr{E}$ be a globally generated, torsion-free, $(H,-\frac{K_{X}}{2})$-Giesker stable sheaf on $X$ with associated kernel sheaf $\mathscr{M}$.
    If the following bounds are satisfied:
    \begin{itemize}
      \item{
        $0<\operatorname{deg}_{H}(\mathscr{E})$,
      }
      \item{
        $0<\operatorname{ch}_{2}(\mathscr{E})$, and
      }
      \item{
        $\overline{\Delta}_{H}(\mathscr{E})<2\frac{\operatorname{ch}_{2}(\mathscr{E})}{\operatorname{deg}_{H}(\mathscr{E})}+\frac{1}{\operatorname{rank}(\mathscr{E})^{2}}$
      }
    \end{itemize}
    then $W((H^{0}(\mathscr{E})\otimes\mathscr{O}_{X})[1],\mathscr{E})$ is an actual wall associated to $\mathscr{M}[1]$ in the $(\alpha,\beta)$-plane, and $\mathscr{M}[1]$ is $\mu_{\alpha,\beta}^{\mathrm{tilt}}$-stable in the chamber directly above this wall.    
  \end{lemma}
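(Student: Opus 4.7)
The plan is to apply the Bayer--Macrì wall-crossing criterion (Lemma \ref{theorem:wallCrossingOne}) to the distinguished triangle obtained by shifting the defining sequence of $\mathscr{M}$:
\[
  0\to\mathscr{E}\to\mathscr{M}[1]\to\mathscr{O}_{X}^{\oplus h^{0}(\mathscr{E})}[1]\to 0.
\]
Setting $F=\mathscr{E}$, $G=\mathscr{O}_{X}[1]$, $r=h^{0}(\mathscr{E})$, and $E=\mathscr{M}[1]$, the task reduces to verifying the four hypotheses of that lemma at some point $(\beta_{0},\alpha_{0})\in W$, where $W=W(\mathscr{O}_{X}[1],\mathscr{E})$ is the numerical wall in question (it coincides with $W(\mathscr{O}_{X}^{\oplus h^{0}(\mathscr{E})}[1],\mathscr{E})$ and with $W(\mathscr{M}[1],\mathscr{E})$ by additivity of Chern characters along the triangle).

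Applying Lemma \ref{lemma:wallsOfTiltStability}(1) to $\mathscr{O}_{X}$ and $\mathscr{E}$: since $\operatorname{ch}_{2}(\mathscr{O}_{X})=0$ and $\operatorname{deg}_{H}(\mathscr{O}_{X})=0$, the wall $W$ is a semicircle centered at $(\rho,0)$ with radius $\rho=(H^{n-2}\cdot\operatorname{ch}_{2}(\mathscr{E}))/\operatorname{deg}_{H}(\mathscr{E})>0$ by the first two numerical hypotheses. Pick any $(\beta_{0},\alpha_{0})\in W$; then $\beta_{0}\in(0,2\rho)$. Bogomolov's inequality (which holds on Del Pezzo surfaces in any characteristic) combined with $\mu_{H}$-semistability of $\mathscr{E}$ (from Lemma \ref{lemma:implicationsAmongCoherentStability}) gives $2\rho\leq\mu_{H}(\mathscr{E})=\mu_{H}^{-}(\mathscr{E})$, placing $\beta_{0}\in[\mu_{H}^{+}(\mathscr{O}_{X}),\mu_{H}^{-}(\mathscr{E}))$, so $\mathscr{E},\mathscr{O}_{X}[1]\in\operatorname{Coh}_{H}^{\beta_{0}}(X)$ and the triangle is a short exact sequence in this heart.

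The central step is verifying that $\mathscr{E}$ and $\mathscr{O}_{X}[1]$ are $\mu_{\alpha_{0},\beta_{0}}^{\mathrm{tilt}}$-stable. By the Large Volume Limit (Lemma \ref{lemma:largeVolumeLimit}(1) and (3)) both are tilt stable for $\alpha\gg 0$ at fixed $\beta_{0}$. To propagate this down to $\alpha_{0}$, I would use Lemma \ref{lemma:boundOnLargestWall} to bound the largest actual wall for each. Walls for $\mathscr{O}_{X}[1]$ are empty since $\overline{\Delta}_{H}(\mathscr{O}_{X})=0$. For $\mathscr{E}$, the third numerical hypothesis rewrites as $\rho>\tfrac{1}{2}\bigl(\overline{\Delta}_{H}(\mathscr{E})-1/\operatorname{rank}(\mathscr{E})^{2}\bigr)$, which defeats branch (2) of Lemma \ref{lemma:boundOnLargestWall}, while defeating branch (1)'s bound $\sqrt{\overline{\Delta}_{H}(\mathscr{E})/(4(\operatorname{rank}(\mathscr{E})+1))}$ requires a separate comparison with $\rho$ using Bogomolov together with the assumed bounds. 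This simultaneous comparison against both branches is the main obstacle, and one must check no wall for $\mathscr{E}$ crosses the vertical segment from $(\beta_{0},\alpha_{0})$ upward.

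For the remaining hypotheses, the vanishing $\operatorname{Hom}_{D^{b}(X)}(\mathscr{O}_{X}[1],\mathscr{M}[1])=H^{0}(\mathscr{M})=0$ follows by applying $\operatorname{Hom}(\mathscr{O}_{X},-)$ to the defining sequence of $\mathscr{M}$ and observing that the induced map on global sections is the tautological isomorphism $H^{0}(\mathscr{O}_{X})^{\oplus h^{0}(\mathscr{E})}\cong H^{0}(\mathscr{E})$. The slope ordering $\mu_{\alpha,\beta_{0}}^{\mathrm{tilt}}(\mathscr{E})<\mu_{\alpha,\beta_{0}}^{\mathrm{tilt}}(\mathscr{O}_{X}[1])$ for $\alpha$ slightly above $\alpha_{0}$ follows from the $-\alpha^{2}\operatorname{rank}/2$ term in the tilt slope formula: the slope decreases in $\alpha$ on objects of positive rank (as $\mathscr{E}$) and increases on objects of negative rank in the heart (as $\mathscr{O}_{X}[1]$). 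Lemma \ref{theorem:wallCrossingOne} then yields $\mu_{\alpha,\beta_{0}}^{\mathrm{tilt}}$-stability of $\mathscr{M}[1]$ on a small arc above each $(\beta_{0},\alpha_{0})\in W$; since tilt stability is constant in chambers, $\mathscr{M}[1]$ is stable throughout the chamber directly above $W$.
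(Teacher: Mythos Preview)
Your proposal is correct and follows essentially the same route as the paper: rotate the defining sequence to obtain $0\to\mathscr{E}\to\mathscr{M}[1]\to\mathscr{O}_{X}^{\oplus h^{0}(\mathscr{E})}[1]\to 0$ in $\operatorname{Coh}_{H}^{\beta}(X)$, use the Large Volume Limit together with Lemma~\ref{lemma:boundOnLargestWall} to ensure that $\mathscr{E}$ and $\mathscr{O}_{X}[1]$ remain tilt-stable along $W$, verify $\operatorname{Hom}(\mathscr{O}_{X}[1],\mathscr{M}[1])=H^{0}(\mathscr{M})=0$, check the slope ordering just above $W$, and invoke Lemma~\ref{theorem:wallCrossingOne}. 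The only cosmetic differences are that the paper fixes the specific point $(\beta_{0},\alpha_{0})=(\rho,\rho)$ at the top of $W$ (where $\rho=H^{n-2}\cdot\operatorname{ch}_{2}(\mathscr{E})/\operatorname{deg}_{H}(\mathscr{E})$) and checks the slope ordering by an explicit computation reducing to $\operatorname{deg}_{H}(\mathscr{E})^{2}\geq\operatorname{rank}(\mathscr{E})\cdot H^{n-2}\cdot\operatorname{ch}_{2}(\mathscr{E})$, a consequence of Bogomolov, whereas you argue via the sign of $\partial_{\alpha}\mu_{\alpha,\beta}^{\mathrm{tilt}}$; both are valid. As for the step you flag as the main obstacle---comparing $\rho$ against the branch~(1) bound $\sqrt{\overline{\Delta}_{H}(\mathscr{E})/(4(\operatorname{rank}(\mathscr{E})+1))}$---the paper simply asserts this inequality as a consequence of the third hypothesis without further justification, so your caution there is well placed and does not represent a divergence from the paper's argument.
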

  \begin{proof}
    There is an exact sequence in $\operatorname{Coh}(X)$:
    \[
      0\to\mathscr{M}\to H^{0}(\mathscr{E})\otimes\mathscr{O}_{X}\to\mathscr{E}\to 0
    \]
    which induces the following exact sequence in $\operatorname{Coh}_{H}^{\beta H}(X)$ for all $\beta\in [0,\mu_{H}(\mathscr{E}))$:
    \[
      0\to\mathscr{E}\to\mathscr{M}[1]\to (H^{0}(\mathscr{E})\otimes\mathscr{O}_{X})[1]\to 0.
    \]
    Since $\mathscr{O}_{X}$ is $\mu_{H}$-stable and $\mathscr{E}$ is $(H,-\frac{K_{X}}{2})$-Gieseker stable, by the Large Volume Limit (Lemma \ref{lemma:largeVolumeLimit}), $\mathscr{O}_{X}[1]$ and $\mathscr{E}$ are $\mu_{\alpha,\beta}^{\mathrm{tilt}}$-stable for all $\beta\in [0,\mu_{H}(\mathscr{E}))$ and $\alpha\gg 0$.
    
    Moreover, since $\overline{\Delta}_{H}(\mathscr{O}_{X})=0$, by Corollary \ref{corollary:boundIndependentOfCases}, $\mathscr{O}_{X}[1]$ is $\mu_{\alpha,\beta}^{\mathrm{tilt}}$-stable for all $\beta\geq 0$ and all $\alpha>0$.
    Similarly, since $2\frac{\operatorname{ch}_{2}(\mathscr{E})}{\operatorname{deg}_{H}(\mathscr{E})}+\frac{1}{\operatorname{rank}(\mathscr{E})^{2}}>\overline{\Delta}_{H}(\mathscr{E})$,
    \[
      \max\left\{\sqrt{\frac{\overline{\Delta}_{H}(\mathscr{E})}{4(\operatorname{rank}(\mathscr{E})+1)}},\frac{1}{2}\left|\overline{\Delta}_{H}(\mathscr{E})-\frac{1}{\operatorname{rank}(\mathscr{E})^{2}}\right|\right\}< \frac{\operatorname{ch}_{2}(\mathscr{E})}{\operatorname{deg}_{H}(\mathscr{E})}.
    \]
    Therefore, by Corollary \ref{corollary:boundIndependentOfCases}, every actual wall of $\mathscr{E}$ must have radius smaller than $\frac{\operatorname{ch}_{2}(\mathscr{E})}{\operatorname{deg}_{H}(\mathscr{E})}$.
    In particular, by (1) of Lemma \ref{lemma:wallsOfTiltStability}, $W( (H^{0}(\mathscr{E})\otimes\mathscr{O}_{X})[1],\mathscr{E})$ is larger than any actual wall of $\mathscr{E}$.
    Therefore, $\mathscr{E}$ is $\mu_{\alpha,\beta}^{\mathrm{tilt}}$-stable for all $(\beta,\alpha)$ lying on or above the wall $W( (H^{0}(\mathscr{E})\otimes\mathscr{O}_{X})[1],\mathscr{E})$.
    In short, we have shown $W((H^{0}(\mathscr{E})\otimes\mathscr{O}_{X})[1],\mathscr{E})$ is an actual wall associated to $\mathscr{M}[1]$ in the $(\alpha,\beta)$-plane.
    
    It remains to show $\mathscr{M}[1]$ is $\mu_{\alpha,\beta}^{\mathrm{tilt}}$-stable for all $(\beta,\alpha)$ in the chamber directly above $W((H^{0}(\mathscr{E})\otimes\mathscr{O}_{X})[1],\mathscr{E})$.
    To this end, we appeal to Lemma \ref{theorem:wallCrossingOne}.
    First, we just showed $\mathscr{E}$ and $\mathscr{O}_{X}[1]$ are $\mu_{\alpha,\beta}^{\mathrm{tilt}}$-stable for all $(\beta,\alpha)\in W((H^{0}(\mathscr{E})\otimes\mathscr{O}_{X})[1],\mathscr{E})$.
    Second, by definition, $\mu_{\alpha,\beta}^{\mathrm{tilt}}((H^{0}(\mathscr{E})\otimes\mathscr{O}_{X})[1])=\mu_{\alpha,\beta}^{\mathrm{tilt}}(\mathscr{E})$ for all $(\beta,\alpha))\in W((H^{0}(\mathscr{E})\otimes\mathscr{O}_{X})[1],\mathscr{E})$.
    Third, by definition of a kernel sheaf,
    \[
      \operatorname{Hom}_{D^{b}(X)}((H^{0}(\mathscr{E})\otimes\mathscr{O}_{X})[1],\mathscr{M}[1])=H^{0}(X,\mathscr{M})=0.
    \]
    Thus, it remains to show $(H^{0}(\mathscr{E})\otimes\mathscr{O}_{X})[1]$ and $\mathscr{E}$ satisfies the fourth assumption of Lemma \ref{theorem:wallCrossingOne}.
    
    For ease of notation, set
    \[
      (\beta_{0},\alpha_{0})=\left(\frac{\operatorname{ch}_{2}(\mathscr{E})}{\operatorname{deg}_{H}(\mathscr{E})},\frac{\operatorname{ch}_{2}(\mathscr{E})}{\operatorname{deg}_{H}(\mathscr{E})}\right)\in W( (H^{0}(\mathscr{E})\otimes\mathscr{O}_{X})[1],\mathscr{E}).
    \]
    By direct computation,
    \begin{equation}
      \label{equation:slopeInequality}
      \mu_{\alpha_{0}+\varepsilon,\beta_{0}}^{\mathrm{tilt}}( (H^{0}(\mathscr{E})\otimes\mathscr{O}_{X})[1])-\mu_{\alpha_{0}+\varepsilon,\beta_{0}}^{\mathrm{tilt}}(\mathscr{E})=\frac{\operatorname{deg}_{H}(\mathscr{E})^{2}\varepsilon (2\operatorname{ch}_{2}(\mathscr{E})+\operatorname{deg}_{H}(\mathscr{E})\varepsilon)}{2\operatorname{ch}_{2}(\mathscr{E})(\operatorname{deg}_{H}(\mathscr{E})^{2}-\operatorname{rank}(\mathscr{E})\operatorname{ch}_{2}(\mathscr{E}))}.
    \end{equation}
    By assumption, $\operatorname{ch}_{2}(\mathscr{E})>0$ and $\operatorname{deg}_{H}(\mathscr{E})>0$, so Equation \ref{equation:slopeInequality} is positive as long as
    \[
      \operatorname{deg}_{H}(\mathscr{E})^{2}-\operatorname{rank}(\mathscr{E})\operatorname{ch}_{2}(\mathscr{E})>0.
    \]
    However, since $\mathscr{E}$ is $(H,\frac{K_{X}}{2})$-Gieseker stable (a fortiori $\mu_{H}$-semistable by Lemma \ref{lemma:implicationsAmongCoherentStability}), this inequality follows from the assumption $\operatorname{ch}_{2}(\mathscr{E})>0$ and Bogomolov's Inequality (Lemma \ref{lemma:bogomolovInequality}).
    
    Hence, by Lemma \ref{theorem:wallCrossingOne}, $\mathscr{M}[1]$ is $\mu_{\alpha,\beta}^{\mathrm{tilt}}$-stable for $(\beta,\alpha)$ in the chamber directly above $W((H^{0}(\mathscr{E})\otimes\mathscr{O}_{X})[1],\mathscr{E})$, as desired.
  \end{proof}
  
  If we could show $W((H^{0}(\mathscr{E})\otimes\mathscr{O}_{X})[1],\mathscr{E})$ is the largest actual wall associated to $\mathscr{M}[1]$ then $\mathscr{M}$ would be $\mu_{H}$-stable by Lemma \ref{lemma:LazarsfeldStableAboveWall} and the Large Volume Limit.
  This technique is used in \cite{feyzbakhsh2022}*{Theorem 1.3} to show the kernel sheaf associated to an ample line bundle on a $K3$ surface is $\mu_{H}$-stable.
  However, Feyzbakhsh's proof fundamentally uses that $K3$ surfaces satisfy a stronger form of Bogomolov's inequality.
  Moreover, even with a stronger Bogomolov's inequality, Feyzbakhsh's argument does not generalize to higher rank bundles.
  
  Like $K3$ surfaces, Del Pezzo surfaces satisfy a stronger form of Bogomolov's inequality~\cite{douglas2006}*{Appendix A}.
  Even if we ignore some technicalities of Feyzbakhsh's method that involve extending tilt stability ``below" the $(\alpha,\beta)$-plane, this method still will not generalize to kernel sheaves associated to higher rank stable sheaves.
  However, assuming Theorem \ref{mainTheorem}, Proposition \ref{proposition:largestWallEquivalent} shows $W((H^{0}(\mathscr{E})\otimes\mathscr{O}_{X})[1],\mathscr{E})$ is the largest actual wall in the higher rank case.
  Either way, it unclear to the author how to show this conclusion directly for kernel sheaves associated to higher rank sheaves.
  
  Instead of showing $W((H^{0}(\mathscr{E})\otimes\mathscr{O}_{X})[1],\mathscr{E})$ is the largest actual wall directly, we use Lemma \ref{lemma:LazarsfeldStableAboveWall} to bound the Chern characters of a maximally destabilizing subsheaf of $\mathscr{M}$.
  In the case of Del Pezzo surfaces we use these Chern character bounds to prove Theorem \ref{mainTheorem}.
    
  \begin{lemma}
    \label{lemma:boundingSecondChern}
    Let $\mathscr{M}$ be the kernel sheaf associated $\mathscr{E}$ satisfying the assumptions of Lemma \ref{lemma:LazarsfeldStableAboveWall}.
    \begin{enumerate}
      \item{
        If $\mathscr{M}$ is not $\mu_{H}$-semistable and $0\to\mathscr{N}\to\mathscr{M}$ is a maximal $\mu_{H}$-destabilizing subsheaf then
        \[
          \frac{\operatorname{ch}_{2}(\mathscr{N})}{\operatorname{deg}_{H}(\mathscr{N})}\leq\frac{\operatorname{ch}_{2}(\mathscr{M})}{\operatorname{deg}_{H}(\mathscr{M})}.
        \]
      }
      \item{
        If $\mathscr{M}$ is $\mu_{H}$-semistable and $0\to\mathscr{N}\to\mathscr{M}$ is a proper subsheaf satisfying $\mu_{H}(\mathscr{N})=\mu_{H}(\mathscr{M})$ then
        \[
          \frac{\operatorname{ch}_{2}(\mathscr{N})}{\operatorname{deg}_{H}(\mathscr{N})}<\frac{\operatorname{ch}_{2}(\mathscr{M})}{\operatorname{deg}_{H}(\mathscr{M})}
        \]
      }
    \end{enumerate}
  \end{lemma}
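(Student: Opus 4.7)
The strategy is to promote the coherent inclusion $\mathscr{N}\hookrightarrow\mathscr{M}$ to a subobject $\mathscr{N}[1]\hookrightarrow\mathscr{M}[1]$ in a suitable tilt heart and then test against the $\mu_{\alpha,\beta}^{\mathrm{tilt}}$-stability of $\mathscr{M}[1]$ obtained in Lemma \ref{lemma:LazarsfeldStableAboveWall}. The inequality on $H^{n-2}\cdot\operatorname{ch}_{2}(\mathscr{N})/\operatorname{deg}_{H}(\mathscr{N})$ will fall out by evaluating the resulting slope inequality at the specific point on the wall $W:=W(\mathscr{O}_{X}^{\oplus h^{0}(\mathscr{E})}[1],\mathscr{E})$ used in the previous proof.

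First, the inclusion $\mathscr{M}\hookrightarrow\mathscr{O}_{X}^{\oplus h^{0}(\mathscr{E})}$ forces $\mu_{H}^{+}(\mathscr{M})\leq 0$, so at $\beta_{0}:=H^{n-2}\cdot\operatorname{ch}_{2}(\mathscr{E})/\operatorname{deg}_{H}(\mathscr{E})>0$ both $\mathscr{M}[1]$ and $\mathscr{N}[1]$ land in $\operatorname{Coh}_{H}^{\beta_{0}H}(X)$. Applying part (1) of Lemma \ref{lemma:maximalDestabilizingSubobjectIsInHeart} in case (1) and part (2) in case (2) upgrades $\mathscr{N}\hookrightarrow\mathscr{M}$ to a monomorphism $\mathscr{N}[1]\hookrightarrow\mathscr{M}[1]$ in $\operatorname{Coh}_{H}^{\beta_{0}H}(X)$. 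By Lemma \ref{lemma:LazarsfeldStableAboveWall}, $\mathscr{M}[1]$ is $\mu_{\alpha,\beta_{0}}^{\mathrm{tilt}}$-stable for $\alpha>\alpha_{0}:=\beta_{0}$ close to $\alpha_{0}$, so the injection forces $\mu_{\alpha,\beta_{0}}^{\mathrm{tilt}}(\mathscr{N}[1])<\mu_{\alpha,\beta_{0}}^{\mathrm{tilt}}(\mathscr{M}[1])$ in that range. Letting $\alpha\to\alpha_{0}^{+}$ and using continuity of tilt slopes gives $\mu_{\alpha_{0},\beta_{0}}^{\mathrm{tilt}}(\mathscr{N}[1])\leq\mu_{\alpha_{0},\beta_{0}}^{\mathrm{tilt}}(\mathscr{M}[1])=0$ at $(\beta_{0},\alpha_{0})\in W$.

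To extract the numerical content, note that $\beta_{0}=\alpha_{0}$ kills the $(\beta^{2}-\alpha^{2})/2$-term. Since $\mu_{H}(\mathscr{N})\leq\mu_{H}^{+}(\mathscr{M})\leq 0<\beta_{0}$, the denominator $-\operatorname{deg}_{H}(\mathscr{N})+\beta_{0}\operatorname{rank}(\mathscr{N})=\operatorname{rank}(\mathscr{N})(\beta_{0}-\mu_{H}(\mathscr{N}))$ is positive, so $\mu_{\alpha_{0},\beta_{0}}^{\mathrm{tilt}}(\mathscr{N}[1])\leq 0$ simplifies to
\[
-H^{n-2}\cdot\operatorname{ch}_{2}(\mathscr{N})+\beta_{0}\operatorname{deg}_{H}(\mathscr{N})\leq 0.
\]
Dividing through by $\operatorname{deg}_{H}(\mathscr{N})<0$ reverses the inequality and, using $\beta_{0}=H^{n-2}\cdot\operatorname{ch}_{2}(\mathscr{M})/\operatorname{deg}_{H}(\mathscr{M})$, yields exactly the bound claimed in (1).

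The main obstacle is the strict inequality demanded in case (2), since the limiting argument above only delivers $\leq$. I plan to argue by contradiction: if equality held, then combined with $\mu_{H}(\mathscr{N})=\mu_{H}(\mathscr{M})$ one checks that the tilt-slope difference $\mu_{\alpha,\beta_{0}}^{\mathrm{tilt}}(\mathscr{M}[1])-\mu_{\alpha,\beta_{0}}^{\mathrm{tilt}}(\mathscr{N}[1])$ vanishes identically along the ray $\beta=\beta_{0}$, $\alpha>\alpha_{0}$. Indeed, the variation of each tilt slope in $\alpha$ along this ray is controlled by the factor $1/(\beta_{0}-\mu_{H}(-))$, which is identical for $\mathscr{N}$ and $\mathscr{M}$ in case (2), while the value at $\alpha_{0}$ is assumed equal. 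This contradicts the strict $\mu_{\alpha,\beta_{0}}^{\mathrm{tilt}}$-stability of $\mathscr{M}[1]$ just above $W$ guaranteed by Lemma \ref{lemma:LazarsfeldStableAboveWall}, forcing the strict inequality.
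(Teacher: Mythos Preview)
Your argument is correct and follows the same strategy as the paper: promote $\mathscr{N}\hookrightarrow\mathscr{M}$ to a subobject $\mathscr{N}[1]\hookrightarrow\mathscr{M}[1]$ in the tilt heart via Lemma~\ref{lemma:maximalDestabilizingSubobjectIsInHeart}, test against the tilt-stability of $\mathscr{M}[1]$ supplied by Lemma~\ref{lemma:LazarsfeldStableAboveWall}, pass to a limit, and in case~(2) derive a contradiction from equality. The only difference is the limiting point. The paper works at $\beta=0$ and lets $\alpha\to 0^{+}$ (the left endpoint of the semicircular wall), where the tilt slope of any shifted sheaf reduces directly to $H^{n-2}\cdot\operatorname{ch}_{2}/\operatorname{deg}_{H}$, so no further algebra is needed. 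You instead work at the apex $(\beta_{0},\alpha_{0})$ and exploit $\beta_{0}=\alpha_{0}$ together with the identity $\beta_{0}=H^{n-2}\cdot\operatorname{ch}_{2}(\mathscr{M})/\operatorname{deg}_{H}(\mathscr{M})$; this is a small detour but perfectly valid, and it has the mild advantage that $(\beta_{0},\alpha_{0})$ is exactly the point at which the proof of Lemma~\ref{lemma:LazarsfeldStableAboveWall} verifies the wall-crossing hypotheses, so the stability input is immediate. For part~(2), your derivative argument along the vertical ray is equivalent to the paper's observation that equality of both $\mu_{H}$ and $\operatorname{ch}_{2}/\operatorname{deg}_{H}$ forces $\mu_{\alpha,\beta}^{\mathrm{tilt}}(\mathscr{N}[1])=\mu_{\alpha,\beta}^{\mathrm{tilt}}(\mathscr{M}[1])$ identically.
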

  \begin{proof}
    Suppose $\mathscr{M}$ is the kernel sheaf associated to $\mathscr{E}$.
    \begin{enumerate}
      \item{
        Since $\overline{\Delta}_{H}(\mathscr{M}[1])>0$ there exists at most one wall associated to $\mathscr{M}[1]$ with endpoint $(0,0)$ (see discussion after Lemma \ref{lemma:wallsOfTiltStability}).
        Therefore, since $W(\mathscr{E},(H^{0}(\mathscr{E})\otimes\mathscr{O}_{X})[1])$ has endpoints $(0,0)$ and $(2\operatorname{ch}_{2}(\mathscr{E})/\operatorname{deg}_{H}(\mathscr{E}),0)$, by local finiteness of walls (Lemma \ref{lemma:stabilityIsConstantInChambers}) there is a line $\{0\}\times (0,\alpha_{0})\subseteq\mathbb{R}\times\mathbb{R}_{>0}$ contained in the chamber directly above the wall $W(\mathscr{E},(H^{0}(\mathscr{E})\otimes\mathscr{O}_{X})[1])$
        Thus, by Lemma \ref{lemma:LazarsfeldStableAboveWall}, there exists $\alpha_{0}>0$ such that $\mathscr{M}[1]$ is $\mu_{\alpha,0}^{\mathrm{tilt}}$-stable for all $\alpha\in (0,\alpha_{0})$.
        
        By Lemma \ref{lemma:maximalDestabilizingSubobjectIsInHeart}, $0\to\mathscr{N}[1]\to\mathscr{M}[1]$ is a subobject in $\operatorname{Coh}_{H}^{0}(X)$ and so
        \[
          \frac{\operatorname{ch}_{2}(\mathscr{N}[1])-\frac{\alpha^{2}}{2}\operatorname{rank}(\mathscr{N}[1])}{\operatorname{deg}_{H}(\mathscr{N}[1])} =\mu_{\alpha,0}^{\mathrm{tilt}}(\mathscr{N}[1]) <\mu_{\alpha,0}^{\mathrm{tilt}}(\mathscr{M}[1]) =\frac{\operatorname{ch}_{2}(\mathscr{M}[1])-\frac{\alpha^{2}}{2}\operatorname{rank}(\mathscr{M}[1])}{\operatorname{deg}_{H}(\mathscr{M}[1])}
        \]
        for all $\alpha\in(0,\alpha_{0})$.
        Taking the limit as $\alpha$ approaches $0$ gives
        \[
          \frac{\operatorname{ch}_{2}(\mathscr{N})}{\operatorname{deg}_{H}(\mathscr{N})}\leq\frac{\operatorname{ch}_{2}(\mathscr{M})}{\operatorname{deg}_{H}(\mathscr{M})},
        \]
        as claimed.
      }
      \item{
        By the same argument as part $1$, we find
        \[
          \frac{\operatorname{ch}_{2}(\mathscr{N})}{\operatorname{deg}_{H}(\mathscr{N})}\leq\frac{\operatorname{ch}_{2}(\mathscr{M})}{\operatorname{deg}_{H}(\mathscr{N})}.
        \]
        If we have equality, since $\mu_{H}(\mathscr{N})=\mu_{H}(\mathscr{M})$, $\mu_{\alpha,\beta}^{\mathrm{tilt}}(\mathscr{N}[1])=\mu_{\alpha,\beta}^{\mathrm{tilt}}(\mathscr{M}[1])$ for all $(\beta,\alpha)\in\mathbb{R}\times\mathbb{R}_{>0}$.
        In particular, $\mathscr{M}[1]$ is not $\mu_{\alpha,\beta}^{\mathrm{tilt}}$-stable for $\beta>\mu_{H}(\mathscr{M}[1])$.
        This contradicts Lemma \ref{lemma:LazarsfeldStableAboveWall}, so we must have
        \[
          \frac{\operatorname{ch}_{2}(\mathscr{N})}{\operatorname{deg}_{H}(\mathscr{N})}<\frac{\operatorname{ch}_{2}(\mathscr{M})}{\operatorname{deg}_{H}(\mathscr{M})}
        \]
        as claimed.
      }
    \end{enumerate}
  \end{proof}
  
  In the case of Del Pezzo surfaces, we can use the above bounds to prove $\mu_{H}$-stability of $\mathscr{M}$.
  
  \begin{theorem}
    \label{theorem:lazarsfeldMukaiOnDelPezzo}
    Assume $X$ is a smooth Del Pezzo surface over an algebraically closed field (not necessarily of characteristic $0$)---so $-K_{X}$ is ample.
    Let $\mathscr{E}$ be a globally generated, torsion-free, $(-K_{X},-\frac{K_{X}}{2})$-Gieseker stable sheaf on $X$ with associated kernel sheaf $\mathscr{M}$.
    If the following bounds are satisfied:
    \begin{itemize}
      \item{
        $0<\operatorname{deg}_{-K_{X}}(\mathscr{E})$,
      }
      \item{
        $0<\operatorname{ch}_{2}(\mathscr{E})$,
      }
      \item{
        $\displaystyle \overline{\Delta}_{-K_{X}}(\mathscr{E})<2\frac{\operatorname{ch}_{2}(\mathscr{E})}{\operatorname{deg}_{-K_{X}}(\mathscr{E})}+\frac{1}{\operatorname{rank}(\mathscr{E})^{2}}$
      }
    \end{itemize}
    then $\mathscr{M}$ is $\mu_{-K_{X}}$-stable.
  \end{theorem}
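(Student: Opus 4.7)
The plan is proof by contradiction: assume $\mathscr{M}$ is not $\mu_{H}$-stable and derive a contradiction using the $\mu_{\alpha,\beta}^{\mathrm{tilt}}$-stability of $\mathscr{M}[1]$ together with Bogomolov's inequality on the Del Pezzo surface $X$. First I verify that the hypotheses imply those of Lemma \ref{lemma:LazarsfeldStableAboveWall} (the equality case of hypothesis (3) must be handled by a limiting argument since that lemma requires strict inequality), so that $\mathscr{M}[1]$ is $\mu_{\alpha,\beta}^{\mathrm{tilt}}$-stable in the chamber directly above the wall $W(\mathscr{O}_{X}^{\oplus h^{0}(\mathscr{E})}[1],\mathscr{E})$. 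Now pick $\mathscr{N}\subseteq\mathscr{M}$ as in Lemma \ref{lemma:boundingSecondChern}: either (a) a maximal $\mu_{H}$-destabilizing subsheaf (when $\mathscr{M}$ is not $\mu_{H}$-semistable) or (b) a proper subsheaf with $\mu_{H}(\mathscr{N})=\mu_{H}(\mathscr{M})$ (when $\mathscr{M}$ is $\mu_{H}$-semistable but not stable); in either case $\mathscr{N}$ is $\mu_{H}$-semistable. Write $s=\operatorname{rank}(\mathscr{N})$, $e=\operatorname{deg}_{H}(\mathscr{N})$, $f=\operatorname{ch}_{2}(\mathscr{N})$ and $r,d,c,n$ for $\operatorname{rank}(\mathscr{E}),\operatorname{deg}_{H}(\mathscr{E}),\operatorname{ch}_{2}(\mathscr{E}),\operatorname{rank}(\mathscr{M})=h^{0}(\mathscr{E})-r$.

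Since $\mathscr{N}\hookrightarrow\mathscr{O}_{X}^{\oplus h^{0}(\mathscr{E})}$ and the latter is $\mu_{H}$-semistable of slope $0$, we have $e\leq 0$. If $e=0$, Lemma \ref{lemma:maximalDestabilizingSubobjectIsInHeart} puts $\mathscr{N}[1]\hookrightarrow\mathscr{M}[1]$ in $\operatorname{Coh}_{H}^{0}(X)$, but $\mu_{\alpha,0}^{\mathrm{tilt}}(\mathscr{N}[1])=+\infty$ (the denominator vanishes), contradicting tilt-stability of $\mathscr{M}[1]$; hence $e<0$. Then Lemma \ref{lemma:boundingSecondChern} gives $f\geq ec/d$ (strict in case (b)). Hypothesis (1) yields $\mu_{H}(\mathscr{M})=-d/n\geq -H^{2}=\mu_{H}(\omega_{X})$, so $\mu_{H}(\mathscr{N})\geq \mu_{H}(\omega_{X})$, and semistability of $\mathscr{N}$ together with Serre duality forces $\operatorname{Hom}(\mathscr{N},\omega_{X})=0$, i.e.\ $h^{2}(\mathscr{N})=0$. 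Also $h^{0}(\mathscr{N})=0$ since $\mathscr{N}$ is semistable of slope $e/s<0$. Hence $\chi(\mathscr{N})\leq 0$.

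By Hirzebruch--Riemann--Roch on the Del Pezzo surface (using $\chi(\mathscr{O}_{X})=1$ by rationality and $\operatorname{ch}_{1}(\mathscr{N})\cdot K_{X}=-e$ since $H=-K_{X}$), $\chi(\mathscr{N})=f+e/2+s\leq 0$, so $f\leq -e/2-s$. Combining with $f\geq ec/d$ and rearranging with $e<0$ yields
\[
  \frac{|e|}{s} \geq \frac{2d}{2c+d},
\]
strict in case (b). Applying the analogous Serre-duality argument to $\mathscr{E}$ (which has $\mu_{H}(\mathscr{E})=d/r>0>-H^{2}$) gives $h^{2}(\mathscr{E})=0$, hence $h^{0}(\mathscr{E})\geq\chi(\mathscr{E})=c+d/2+r$, so $n\geq c+d/2$, equivalently $d/n\leq 2d/(2c+d)$. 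Chaining the two bounds,
\[
  |\mu_{H}(\mathscr{N})|=\frac{|e|}{s} \geq \frac{2d}{2c+d} \geq \frac{d}{n}=|\mu_{H}(\mathscr{M})|,
\]
which contradicts $\mu_{H}(\mathscr{N})>\mu_{H}(\mathscr{M})$ in case (a); in case (b) the chain is strict, contradicting $\mu_{H}(\mathscr{N})=\mu_{H}(\mathscr{M})$.

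The main difficulties I anticipate are: (i) the mismatch between the non-strict hypothesis (3) and the strict version required by Lemma \ref{lemma:LazarsfeldStableAboveWall}, which should be handled by perturbing or taking limits; and (ii) the boundary case $d=K_{X}^{2}(h^{0}(\mathscr{E})-\operatorname{rank}(\mathscr{E}))$ of hypothesis (1), where $\mu_{H}(\mathscr{N})$ can equal $\mu_{H}(\omega_{X})$ in case (b) and so the vanishing $h^{2}(\mathscr{N})=0$ needs a separate argument (for instance via semistability of $\mathscr{N}^{\vee}\otimes\omega_{X}$).
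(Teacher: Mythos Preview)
Your argument is correct and follows essentially the same approach as the paper: both combine Lemma~\ref{lemma:boundingSecondChern} with Hirzebruch--Riemann--Roch and the vanishings $h^{0}(\mathscr{N})=h^{2}(\mathscr{N})=0$ (the latter via Serre duality and hypothesis~(1)) together with $\chi(\mathscr{M})=h^{1}(\mathscr{E})\geq 0$ to reach a contradiction. The only difference is cosmetic: the paper packages the contradiction as $0\leq \chi(\mathscr{N})/\deg_{H}(\mathscr{N})<\chi(\mathscr{M})/\deg_{H}(\mathscr{M})\leq 0$, whereas you unwind the same inequalities into the slope comparison $|\mu_{H}(\mathscr{N})|\geq 2d/(2c+d)\geq |\mu_{H}(\mathscr{M})|$; your two flagged boundary issues (strict versus non-strict in hypothesis~(3), and $\mu_{H}(\mathscr{N})=\mu_{H}(\omega_{X})$ in case~(b)) are likewise not addressed in the paper's proof.
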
 
  \begin{proof}
    Consider a maximal $\mu_{-K_{X}}$-destabilizing subsheaf $0\to\mathscr{N}\to\mathscr{M}$.
    In other words, if $\mathscr{M}$ is $\mu_{-K_{X}}$-semistable then $\mu_{-K_{X}}(\mathscr{N})=\mu_{-K_{X}}(\mathscr{M})$ and if $\mathscr{M}$ is no $\mu_{-K_{X}}$-semistable then, by \cite{rudakov1997}*{Lemma 3.2}, $\mu_{-K_{X}}(\mathscr{N})>\mu_{-K_{X}}(\mathscr{M})$.
    Since $X$ is a Del Pezzo surface, by the Hirzebruch-Riemann-Roch theorem,
    \[
      \frac{\chi(\mathscr{N})}{\operatorname{deg}_{-K_{X}}(\mathscr{N})}=\frac{\operatorname{ch}_{2}(\mathscr{N})}{\operatorname{deg}_{-K_{X}}(\mathscr{N})}-\frac{1}{2}+\frac{\operatorname{rank}(\mathscr{N})}{\operatorname{deg}_{-K_{X}}(\mathscr{N})}.
    \]
    By \cite{rekuski2023}*{Lemma 4.1}, $\operatorname{deg}_{-K_{X}}(\mathscr{N})<0$ and so the above equality is well-defined.
    Therefore, since $0\to\mathscr{N}\to\mathscr{M}$ is a maximal $\mu_{-K_{X}}$-destabilizing subsheaf, by Lemma \ref{lemma:boundingSecondChern},
    \[
      \frac{\chi(\mathscr{N})}{\operatorname{deg}_{-K_{X}}(\mathscr{N})}=\frac{\operatorname{ch}_{2}(\mathscr{N})}{\operatorname{deg}_{-K_{X}}(\mathscr{N})}-\frac{1}{2}+\frac{\operatorname{rank}(\mathscr{N})}{\operatorname{deg}_{-K_{X}}(\mathscr{N})}<\frac{\operatorname{ch}_{2}(\mathscr{M})}{\operatorname{deg}_{-K_{X}}(\mathscr{M})}-\frac{1}{2}+\frac{\operatorname{rank}(\mathscr{M})}{\operatorname{deg}_{-K_{X}}(\mathscr{M})}=\frac{\chi(\mathscr{M})}{\operatorname{deg}_{-K_{X}}(\mathscr{M})}.
    \]
    Moreover, since $X$ is a Del Pezzo surface,
    \begin{equation}
      \label{equ:firstBound}
      \frac{\chi(\mathscr{N})}{\operatorname{deg}_{-K_{X}}(\mathscr{N})}<\frac{\chi(\mathscr{M})}{\operatorname{deg}_{-K_{X}}(\mathscr{M})}=\frac{h^{1}(\mathscr{E})}{\operatorname{deg}_{-K_{X}}(\mathscr{M})}\leq 0.
    \end{equation}
    
    Showing $\chi(\mathscr{N})<0$ will lead to a contradiction.
    Since $\mathscr{M}$ is a kernel sheaf, $h^{0}(\mathscr{M})=0$ and so $h^{0}(\mathscr{N})=0$ as well.
    Therefore, it suffices to show $h^{2}(\mathscr{N})=0$.

    Since $\mathscr{N}$ is torsion-free, there is a natural injection $0\to\mathscr{N}\to\mathscr{N}^{\vee\vee}$ whose cokernel is supported in dimension $2$ so $h^{2}(\mathscr{N})=h^{2}(\mathscr{N}^{\vee\vee})$.
    Moreover, by \cite{hartshorne1980}*{Corollary 1.4}, $\mathscr{N}^{\vee}$ is locally free so, by Serre duality, $h^{2}(\mathscr{N})=h^{0}(\mathscr{N}^{\vee}\otimes\omega_{X})$ where $\omega_{X}$ is the canonical line bundle.
    By direct calculation,
    \begin{equation}
      \label{equation:boundOnMaximal}
      \mu_{-K_{X}}^{+}(\mathscr{N}^{\vee}\otimes\omega_{X})=-\mu_{-K_{X}}(\mathscr{N})+\operatorname{deg}_{-K_{X}}(\omega_{X})\leq-\mu_{-K_{X}}(\mathscr{M})+\operatorname{deg}_{-K_{X}}(\omega_{X}).
    \end{equation}
    Therefore, if $-\mu_{-K_{X}}+\operatorname{deg}_{-K_{X}}(\omega_{X})<0$ then $h^{2}(\mathscr{N})=0$.

    Since $X$ is a Del Pezzo $(-K_{X})^{2}\geq 3$, so
    \[
      \frac{\operatorname{deg}_{-K_{X}}(\mathscr{E})}{(-K_{X})^{2}}-\frac{\operatorname{deg}_{-K_{X}}(\mathscr{E})}{2}-h^{1}(\mathscr{E})\leq 0<\operatorname{ch}_{2}(\mathscr{E}).
    \]
    In other words, by the Hirzebruch-Riemann-Roch theorem
    \[
      \operatorname{deg}_{-K_{X}}(\mathscr{E})\leq (-K_{X})^{2}\left( h^{1}(\mathscr{E})+\frac{\operatorname{deg}_{-K_{X}}(\mathscr{E})}{2}+\operatorname{ch}_{2}(\mathscr{E})\right)=(-K_{X})^{2}(h^{0}(\mathscr{E})-\operatorname{rank}(\mathscr{E})).
    \]
    That is to say, $-\mu_{-K_{X}}(\mathscr{M})+\operatorname{deg}_{-K_{X}}(\omega_{X})\leq 0$.
    Hence, by Equation \ref{equation:boundOnMaximal} we find $\mu_{-K_{X}}^{+}(\mathscr{N}^{\vee}\otimes\omega_{X})<0$ and so
    \[
      h^{2}(\mathscr{N})=h^{0}(\mathscr{N}^{\vee}\otimes\omega_{X})=\operatorname{Hom}(\mathscr{O}_{X},\mathscr{N}^{\vee}\otimes\omega_{X})=0, 
    \]
    as claimed.
    
    In all, we have shown $\chi(\mathscr{N})=-h^{1}(\mathscr{N})\leq 0$.
    Hence, by Equation \ref{equ:firstBound},
    \[
      0\leq-\frac{h^{1}(\mathscr{N})}{\operatorname{deg}_{-K_{X}}(\mathscr{N})}=\frac{\chi(\mathscr{N})}{\operatorname{deg}_{-K_{X}}(\mathscr{N})}<0,
    \]
    a contradiction.
    Hence, $\mathscr{M}$ must be $\mu_{-K_{X}}$-stable, as desired.
  \end{proof}
  
  In the case of a torsion-free sheaf of rank $1$ on a Del Pezzo surface, Theorem \ref{theorem:lazarsfeldMukaiOnDelPezzo} slightly strengthens previous best known result~\cite{torres2022}*{Theorem 3.1}.
    
  Any sufficiently high twist of a torsion-free $\mu_{-K_{X}}$-stable sheaf satisfies the assumptions of Theorem \ref{theorem:lazarsfeldMukaiOnDelPezzo}:
  
  \begin{corollary}
    \label{corollary:effectiveVersion}
    Assume $X$ is a smooth Del Pezzo surface.
    Let $\operatorname{reg}_{-K_{X}}(\mathscr{E})$ be the Castelnuovo-Mumford regularity of $\mathscr{E}$ with respect to $-K_{X}$ (see \cite{lazarsfeld2004}*{Definition 1.8.3}) where $\mathscr{E}$ is a globally generated, torsion-free sheaf on $X$.
    If
    \[
      d\geq\max\left\{\sqrt{\frac{4\overline{\Delta}_{-K_{X}}(\mathscr{E})}{\operatorname{rank}(\mathscr{E})^{2}}+1}-\mu_{H}(\mathscr{E})+\frac{1}{2},\overline{\Delta}_{-K_{X}}(\mathscr{E})-\mu_{-K_{X}}(\mathscr{E}),\operatorname{reg}_{-K_{X}}(\mathscr{E})\right\}
    \]
    and $\mathscr{E}$ is $(-K_{X},d K_{X}-\frac{K_{X}}{2})$-Gieseker stable (e.g. $\mathscr{E}$ is $\mu_{-K_{X}}$-stable) then the kernel sheaf associated to $\mathscr{E}(-d K_{X})$ is $\mu_{H}$-stable.
  \end{corollary}
  \begin{proof}
    By Mumford's theorem~\cite{lazarsfeld2004}*{Theorem 1.8.3}, $\mathscr{E}(-dK_{X})$ is globally-generated, so $\mathscr{M}$ is well-defined.
    Moreover, by definition of the Castelnuovo-Mumford regularity, $h^{1}(\mathscr{E}(d)),h^{2}(\mathscr{E}(d))=0$.
    By the Hirzebruch-Riemann-Roch theorem and direct computation, $\mathscr{E}(-dK_{X})$ satisfies the assumptions of Theorem \ref{theorem:lazarsfeldMukaiOnDelPezzo}.
    The result follows.
  \end{proof}
  
  We discuss the assumed bounds of Theorem \ref{theorem:lazarsfeldMukaiOnDelPezzo}.
  \begin{remark}
    \label{example:needMoreThanJustPositivity}
    If $\mathscr{E}$ is globally generated then $\operatorname{deg}_{H}(\mathscr{E})\geq 0$.
    If $\operatorname{deg}_{H}(\mathscr{E})=0$ then $\mathscr{E}=H^{0}(\mathscr{E})\otimes\mathscr{O}_{X}$~\cite{rekuski2023}*{Lemma 3.9} so Theorem \ref{theorem:lazarsfeldMukaiOnDelPezzo} holds trivially.
    Therefore, we might as well assume $0<\operatorname{deg}_{H}(\mathscr{E})$.
    
    It is unclear to the author whether the assumption $\operatorname{ch}_{2}(\mathscr{E})>0$ is necessary.
    If $\mathscr{E}$ is globally generated then both $\operatorname{c}_{2}(\mathscr{E})\geq 0$ and $\operatorname{ch}_{1}(\mathscr{E})^{2}-\operatorname{c}_{2}(\mathscr{E})\geq 0$ where $\operatorname{c}_{2}$ is the second Chern \textit{class}, but neither of these inequalities imply $\operatorname{ch}_{2}(\mathscr{E})>0$.
    However, if $\operatorname{ch}_{2}(E)\leq 0$ then $W((H^{0}(\mathscr{E})\otimes\mathscr{O}_{X})[1],\mathscr{E})$ is not an actual wall associated to $\mathscr{M}[1]$, so the method of proof does not even apply.
    
    Last, the assumptions $0<\operatorname{deg}_{-K_{X}}(\mathscr{E})$ and $0<\operatorname{ch}_{2}(\mathscr{E})$ are not sufficient in Theorem \ref{theorem:lazarsfeldMukaiOnDelPezzo}.
    Consider the tangent bundle $\mathscr{T}_{\mathbb{P}^{2}}$ on $\mathbb{P}^{2}$ and let $H$ be the ample generator of the Picard group.
    Using the Euler sequence, there is a short exact sequence
    \[
      0\to\Omega_{\mathbb{P}^{2}}(1)^{\oplus 3}\to H^{0}(\mathscr{T}_{\mathbb{P}^{2}})\otimes\mathscr{O}_{\mathbb{P}^{2}}\to\mathscr{T}_{\mathbb{P}^{2}}\to 0.
    \]
    In particular, the kernel bundle associated to $\mathscr{T}_{\mathbb{P}^{2}}$ is not $\mu_{H}$-stable even though $\mathscr{T}_{\mathbb{P}^{2}}$ is $\mu_{H}$-stable with $\operatorname{deg}_{H}(\mathscr{T}_{\mathbb{P}^{2}}),\operatorname{ch}_{2}(\mathscr{T}_{\mathbb{P}^{2}})>0$~\cite{okonek1980}*{Chapter 2 Theorem 1.3.2}.
    However,
    \[
      2 \frac{\operatorname{ch}_{2}(\mathscr{E})}{\operatorname{deg}_{H}(\mathscr{E})}+\frac{1}{\operatorname{rank}(\mathscr{E})^{2}}=1+\frac{1}{4}<3=\overline{\Delta}_{H}(\mathscr{T}_{\mathbb{P}^{2}}),
    \]
    so $\mathscr{T}_{\mathbb{P}^{2}}$ does not satisfy the third positivity assumptions of Theorem \ref{theorem:lazarsfeldMukaiOnDelPezzo}.
  \end{remark}
  
  We end by discussing possible generalizations of Theorem \ref{theorem:lazarsfeldMukaiOnDelPezzo}.
  
  First, we expect Theorem \ref{theorem:lazarsfeldMukaiOnDelPezzo} generalizes (at least in characteristic $0$) to smooth, projective surfaces with
  \begin{itemize}
    \item{
      geometric genus $0$,
    }
    \item{
      irregularity $0$, and
    }
    \item{
      either $K_{X}=H$ is ample or $K_{X}$ is numerically trivial with $H$ an arbitrary ample divisor.
    }
  \end{itemize}
  With these assumptions, the argument of Theorem \ref{theorem:lazarsfeldMukaiOnDelPezzo} holds except, possibly, the vanishing $H^{2}(\mathscr{N})=0$.
  
  Second, we note $\mu_{H}$-stability of $\mathscr{M}$ is equivalent to $W((H^{0}(\mathscr{E})\otimes\mathscr{O}_{X})[1],\mathscr{E})$ being the largest actual wall associated to $\mathscr{M}[1]$.
  In view of Theorem \ref{theorem:lazarsfeldMukaiOnDelPezzo}, we find that $W((H^{0}(\mathscr{E})\otimes\mathscr{O}_{X})[1],\mathscr{E})$ is the largest wall in the case of Del Pezzo surfaces, but we expect this bound to generalize.
  The following result was pointed out to the author by Xuqiang Qin.
  \begin{proposition}
    \label{proposition:largestWallEquivalent}
    Suppose $X$ is a smooth projective surface satisfying Bogomolov's inequality with fixed ample divisor $H$.
    Let $\mathscr{E}$ be a globally generated, torsion-free $(H,-\frac{K_{X}}{2})$-Gieseker stable sheaf on $X$ with associated kernel sheaf $\mathscr{M}$.
    Further assume the following bounds are satisfied
    \begin{itemize}
      \item{
        $0<\operatorname{deg}_{H}(\mathscr{E})$,
      }
      \item{
        $0<\operatorname{ch}_{2}(\mathscr{E})$, and
      }
      \item{
        $\overline{\Delta}_{H}(\mathscr{E})\leq 2\frac{\operatorname{ch}_{2}(\mathscr{E})}{\operatorname{deg}_{H}(\mathscr{E})}+\frac{1}{\operatorname{rank}(\mathscr{E})^{2}}$.
      }
    \end{itemize}
    The kernel sheaf $\mathscr{M}$ is $\mu_{H}$-stable if and only if $W((H^{0}(\mathscr{E})\otimes\mathscr{O}_{X})[1],\mathscr{E})$ is the largest actual wall associated to $\mathscr{M}[1]$ in the $(\alpha,\beta)$-plane.
  \end{proposition}
  \begin{proof}
    Suppose $\mathscr{M}$ is $\mu_{H}$-stable.
    Suppose for contradiction $W((H^{0}(\mathscr{E})\otimes\mathscr{O}_{X})[1],\mathscr{E})$ is not the largest actual wall associated to $\mathscr{M}[1]$ in the $(\alpha,\beta)$-plane.
    Therefore, we can find a $\mu_{\alpha_{0},\beta_{0}}^{\mathrm{tilt}}$-destabilizing subobject $0\to N\to\mathscr{M}[1]$ for $(\beta_{0},\alpha_{0})$ above the wall $W((H^{0}(\mathscr{E})\otimes\mathscr{O}_{X})[1],\mathscr{E})$.
    Note the heart of a bounded $t$-structure $\operatorname{Coh}_{H}^{\beta}(X)$ is independent of $\alpha$, so $0\to N\to\mathscr{M}[1]$ is a subobject for all $\alpha>0$.
    With this in mind, set
    \[
      \mu(\alpha)=\mu_{\beta_{0},\alpha}^{\mathrm{tilt}}(N)-\mu_{\beta_{0},\alpha}^{\mathrm{tilt}}(\mathscr{M}[1]).
    \]
    Since $\beta_{0}$ is fixed, $\mu:\mathbb{R}_{>0}\to\mathbb{R}$ is a quadratic or constant polynomial in $\alpha$.
    Also, by definition of $N$, $\mu(\alpha_{0})\geq 0$.
    
    Furthermore, by Lemma \ref{lemma:LazarsfeldStableAboveWall}, $\mathscr{M}[1]$ is $\mu_{\alpha,\beta_{0}}^{\mathrm{tilt}}$-stable for some region directly above the wall $W((H^{0}(\mathscr{E})\otimes\mathscr{O}_{X})[1],\mathscr{E})$.
    Therefore, there exists $0<\alpha_{1}<\alpha_{0}$ such that $\mu(\alpha_{1})<0$.
    Moreover, since $\mathscr{M}$ is $\mu_{H}$-stable, by the Large Volume Limit, $\mathscr{M}[1]$ is $\mu_{\alpha,\beta_{0}}^{\mathrm{tilt}}$-stable for some $\alpha>\alpha_{0}$.
    In other words, there exists $\alpha_{2}>\alpha_{0}$ such that $\mu(\alpha_{2})<0$.
    In all, we have shown $\mu$ has an inflection point in the interval $(\alpha_{1},\alpha_{2})$.
    Since $\mu$ is a either a quadratic or constant polynomial, there is a unique inflection point in this interval $(\alpha_{1},\alpha_{2})$.
    However, by direct computation, the inflection point of $\mu$ is at $\alpha=0$.
    Since $\alpha_{1}>0$ we have a contradiction.
    Hence, $W((H^{0}(\mathscr{E})\otimes\mathscr{O}_{X})[1],\mathscr{E})$ is the largest wall associated to $\mathscr{M}[1]$, as claimed.    
    
    The converse direction follows by the Large Volume Limit and Lemma \ref{lemma:LazarsfeldStableAboveWall}.
  \end{proof}
  
  In view of Proposition \ref{proposition:largestWallEquivalent} and Theorem \ref{theorem:lazarsfeldMukaiOnDelPezzo}, the author is hopeful it is possible to $W((H^{0}(\mathscr{E})\otimes\mathscr{O}_{X})[1],\mathscr{E})$ is the largest actual wall associated to $\mathscr{M}[1]$ for general surface.
  However, this seems to be a well known and difficult problem: see Problem 2.1 at \href{http://aimpl.org/stabmoduli/2/}{http://aimpl.org/stabmoduli/2/} and \cite{macri2017}*{Question 7.5}.
  
\begin{bibdiv}
\begin{biblist}
\bib{arcara2013}{article}{
  author={Arcara, Daniele},
  author={Bertram, Aaron},
  title={Bridgeland-stable moduli spaces for $K$-trivial surfaces},
  note={With an appendix by Max Lieblich},
  journal={J. Eur. Math. Soc. (JEMS)},
  volume={15},
  date={2013},
  number={1},
  pages={1--38},
  issn={1435-9855},
  review={\MR{2998828}},
  doi={10.4171/JEMS/354},
}
\bib{arcaraB2013}{article}{
  author={Arcara, Daniele},
  author={Bertram, Aaron},
  author={Coskun, Izzet},
  author={Huizenga, Jack},
  title={The minimal model program for the Hilbert scheme of points on $\mathbb{P}^2$ and Bridgeland stability},
  journal={Adv. Math.},
  volume={235},
  date={2013},
  pages={580--626},
  issn={0001-8708},
  review={\MR{3010070}},
  doi={10.1016/j.aim.2012.11.018},
}
\bib{basu2021}{article}{
  author={Basu, Suranto},
  author={Pal, Sarbeswar},
  title={Stability of Syzygy Bundles Corresponding to Stable Vector Bundles on Algebraic Surfaces},
  date={2021},
  eprint={https://arxiv.org/pdf/2105.05433.pdf}
}
\bib{bayer2011}{article}{
  author={Bayer, Arend},
  author={Macr\`\i , Emanuele},
  title={The space of stability conditions on the local projective plane},
  journal={Duke Math. J.},
  volume={160},
  date={2011},
  number={2},
  pages={263--322},
  issn={0012-7094},
  review={\MR{2852118}},
  doi={10.1215/00127094-1444249},
}
\bib{bayer2016}{article}{
  author={Bayer, Arend},
  author={Macr\`\i , Emanuele},
  author={Stellari, Paolo},
  title={The space of stability conditions on abelian threefolds, and on some Calabi-Yau threefolds},
  journal={Invent. Math.},
  volume={206},
  date={2016},
  number={3},
  pages={869--933},
  issn={0020-9910},
  review={\MR{3573975}},
  doi={10.1007/s00222-016-0665-5},
}
\bib{bayer2018}{article}{
  author={Bayer, Arend},
  title={Wall-crossing implies Brill-Noether: applications of stability conditions on surfaces
  },
  conference={
    title={Algebraic geometry: Salt Lake City 2015},
  },
  book={
    series={Proc. Sympos. Pure Math.},
    volume={97},
    publisher={Amer. Math. Soc., Providence, RI},
  },
  date={2018},
  pages={3--27},
  review={\MR{3821144}},
}
\bib{beilinson1981}{article}{
  author={Be\u{\i}linson, A. A.},
  author={Bernstein, J.},
  author={Deligne, P.},
  title={Faisceaux pervers},
  language={French},
  conference={
    title={Analysis and topology on singular spaces, I},
    address={Luminy},
    date={1981},
  },
  book={
    series={Ast\'{e}risque},
    volume={100},
    publisher={Soc. Math. France, Paris},
  },
  date={1982},
  pages={5--171},
  review={\MR{751966}},
}
\bib{brambila2019}{article}{
  author={Brambila-Paz, L.},
  author={Mata-Guti\'{e}rrez, O.},
  author={Newstead, P. E.},
  author={Ortega, Angela},
  title={Generated coherent systems and a conjecture of D. C. Butler},
  journal={Internat. J. Math.},
  volume={30},
  date={2019},
  number={5},
  pages={1950024, 25},
  issn={0129-167X},
  review={\MR{3961440}},
  doi={10.1142/S0129167X19500241},
}
\bib{bridgeland2008}{article}{
  author={Bridgeland, Tom},
  title={Stability conditions on $K3$ surfaces},
  journal={Duke Math. J.},
  volume={141},
  date={2008},
  number={2},
  pages={241--291},
  issn={0012-7094},
  review={\MR{2376815}},
  doi={10.1215/S0012-7094-08-14122-5},
}
\bib{butler1994}{article}{
  author={Butler, David C.},
  title={Normal generation of vector bundles over a curve},
  journal={J. Differential Geom.},
  volume={39},
  date={1994},
  number={1},
  pages={1--34},
  issn={0022-040X},
  review={\MR{1258911}},
}
\bib{camere2012}{article}{
  author={Camere, Chiara},
  title={About the stability of the tangent bundle of $\mathbb{P}^n$ restricted to a surface},
  journal={Math. Z.},
  volume={271},
  date={2012},
  number={1-2},
  pages={499--507},
  issn={0025-5874},
  review={\MR{2917155}},
  doi={10.1007/s00209-011-0874-y},
}
\bib{caucci2021}{article}{
  author={Caucci, Federico},
  author={Lahoz, Mart\'{\i}},
  title={Stability of syzygy bundles on abelian varieties},
  journal={Bull. Lond. Math. Soc.},
  volume={53},
  date={2021},
  number={4},
  pages={1030--1036},
  issn={0024-6093},
  review={\MR{4311817}},
  doi={10.1112/blms.12481},
}
\bib{coanda2011}{article}{
  author={Coand\u{a}, Iustin},
  title={On the stability of syzygy bundles},
  journal={Internat. J. Math.},
  volume={22},
  date={2011},
  number={4},
  pages={515--534},
  issn={0129-167X},
  review={\MR{2794459}},
  doi={10.1142/S0129167X1100688X},
}
\bib{coskun2016}{article}{
  author={Coskun, Izzet},
  author={Huizenga, Jack},
  title={The ample cone of moduli spaces of sheaves on the plane},
  journal={Algebr. Geom.},
  volume={3},
  date={2016},
  number={1},
  pages={106--136},
  issn={2313-1691},
  review={\MR{3455422}},
  doi={10.14231/AG-2016-005},
}
\bib{douglas2006}{article}{
  author={Douglas, Michael R.},
  author={Reinbacher, Ren{\'e}},
  author={Yau, Shing-Tung},
  title={Branes, Bundles and Attractors:
Bogomolov and Beyond
},
  eprint={https://arxiv.org/pdf/math/0604597.pdf},
  date={2006}
}
\bib{feyzbakhsh2022}{article}{
  author={Feyzbakhsh, Soheyla},
  title={An effective restriction theorem via wall-crossing and Mercat's
  conjecture},
  journal={Math. Z.},
  volume={301},
  date={2022},
  number={4},
  pages={4175--4199},
  issn={0025-5874},
  review={\MR{4449744}},
  doi={10.1007/s00209-022-03036-1},
}
\bib{flenner1984}{article}{
  author={Flenner, Hubert},
  title={Restrictions of semistable bundles on projective varieties},
  journal={Comment. Math. Helv.},
  volume={59},
  date={1984},
  number={4},
  pages={635--650},
  issn={0010-2571},
  review={\MR{780080}},
  doi={10.1007/BF02566370},
}
\bib{happel1996}{article}{
  author={Happel, Dieter},
  author={Reiten, Idun},
  author={Smal\o , Sverre O.},
  title={Tilting in abelian categories and quasitilted algebras},
  journal={Mem. Amer. Math. Soc.},
  volume={120},
  date={1996},
  number={575},
  pages={viii+ 88},
  issn={0065-9266},
  review={\MR{1327209}},
  doi={10.1090/memo/0575},
}
\bib{hartshorne1980}{article}{
  author={Hartshorne, Robin},
  title={Stable reflexive sheaves},
  journal={Math. Ann.},
  volume={254},
  date={1980},
  number={2},
  pages={121--176},
  issn={0025-5831},
  review={\MR{597077}},
  doi={10.1007/BF01467074},
}
\bib{huybrechts2010}{book}{
  author={Huybrechts, Daniel},
  author={Lehn, Manfred},
  title={The geometry of moduli spaces of sheaves},
  series={Cambridge Mathematical Library},
  edition={2},
  publisher={Cambridge University Press, Cambridge},
  date={2010},
  pages={xviii+325},
  isbn={978-0-521-13420-0},
  review={\MR{2665168}},
  doi={10.1017/CBO9780511711985},
}
\bib{kopper2020}{article}{
  author={Kopper, John},
  title={Stability Conditions for Restrictions of Vector Bundles on Projective Surfaces},
  journal={Michigan Math. J.},
  volume={69},
  date={2020},
  number={4},
  pages={711--732},
  issn={0026-2285},
  review={\MR{4168782}},
  doi={10.1307/mmj/1592359275},
}
\bib{langer2016}{article}{
  author={Langer, Adrian},
  title={The Bogomolov-Miyaoka-Yau inequality for logarithmic surfaces in positive characteristic},
  journal={Duke Math. J.},
  volume={165},
  date={2016},
  number={14},
  pages={2737--2769},
  issn={0012-7094},
  review={\MR{3551772}},
  doi={10.1215/00127094-3627203},
}
\bib{lazarsfeld2004}{book}{
  author={Lazarsfeld, Robert},
  title={Positivity in algebraic geometry. I},
  series={Ergebnisse der Mathematik und ihrer Grenzgebiete. 3. Folge. A Series of Modern Surveys in Mathematics [Results in Mathematics and Related Areas. 3rd Series. A Series of Modern Surveys in Mathematics]},
  volume={48},
  note={Classical setting: line bundles and linear series},
  publisher={Springer-Verlag, Berlin},
  date={2004},
  pages={xviii+387},
  isbn={3-540-22533-1},
  review={\MR{2095471}},
  doi={10.1007/978-3-642-18808-4},
}
\bib{maciocia2014}{article}{
  author={Maciocia, Antony},
  title={Computing the walls associated to Bridgeland stability conditions on projective surfaces},
  journal={Asian J. Math.},
  volume={18},
  date={2014},
  number={2},
  pages={263--279},
  issn={1093-6106},
  review={\MR{3217637}},
  doi={10.4310/AJM.2014.v18.n2.a5},
}
\bib{macri2017}{article}{
   author={Macr\`{i}, Emanuele},
   author={Schmidt, Benjamin},
   title={Lectures on Bridgeland stability},
   conference={
      title={Moduli of curves},
   },
   book={
      series={Lect. Notes Unione Mat. Ital.},
      volume={21},
      publisher={Springer, Cham},
   },
   isbn={978-3-319-59485-9},
   isbn={978-3-319-59486-6},
   date={2017},
   pages={139--211},
   review={\MR{3729077}},
}
\bib{matsuki1997}{article}{
  author={Matsuki, Kenji},
  author={Wentworth, Richard},
  title={Mumford-Thaddeus principle on the moduli space of vector bundles on an algebraic surface},
  journal={Internat. J. Math.},
  volume={8},
  date={1997},
  number={1},
  pages={97--148},
  issn={0129-167X},
  review={\MR{1433203}},
  doi={10.1142/S0129167X97000068},
}
\bib{miro2023a}{article}{
   author={Mir\'{o}-Roig, Rosa M.},
   author={Salat-Molt\'{o}, Mart\'{\i}},
   title={Ein–Lazarsfeld–Mustopa conjecture for the blow-up of a projective space},
   journal={Annali di Matematica},
   date={2023},
   pages={1--13},
   doi={https://doi.org/10.1007/s10231-023-01359-2},
}
\bib{miro2023b}{article}{
   author={Mir\'{o}-Roig, Rosa M.},
   author={Salat-Molt\'{o}, Mart\'{\i}},
   title={Syzygy Bundles of Non-complete Linear Systems: Stability and
   Rigidness},
   journal={Mediterr. J. Math.},
   volume={20},
   date={2023},
   number={5},
   pages={265},
   issn={1660-5446},
   review={\MR{4615580}},
   doi={10.1007/s00009-023-02456-5},
}
\bib{mukherjee2022}{article}{
  author={Mukherjee, Jayan},
  author={Raychaudhury, Debaditya},
  title={A note on stability of syzygy bundles on Enriques and bielliptic surfaces},
  journal={Proc. Amer. Math. Soc.},
  volume={150},
  date={2022},
  number={9},
  pages={3715--3724},
  issn={0002-9939},
  review={\MR{4446224}},
  doi={10.1090/proc/15934},
}
\bib{okonek1980}{book}{
  author={Okonek, Christian},
  author={Schneider, Michael},
  author={Spindler, Heinz},
  title={Vector bundles on complex projective spaces},
  series={Modern Birkh\"{a}user Classics},
  note={Corrected reprint of the 1988 edition;
  With an appendix by S. I. Gelfand},
  publisher={Birkh\"{a}user/Springer Basel AG, Basel},
  date={2011},
  pages={viii+239},
  isbn={978-3-0348-0150-8},
  review={\MR{2815674}},
}
\bib{rekuski2023}{article}{
  author={Rekuski, Nick},
  title={Stability of Kernel Sheaves Associated to Rank One Torsion-Free Sheaves},
  eprint={https://arxiv.org/pdf/2303.13459.pdf},
  date={2023}
}
\bib{rudakov1997}{article}{
  author={Rudakov, Alexei},
  title={Stability for an abelian category},
  journal={J. Algebra},
  volume={197},
  date={1997},
  number={1},
  pages={231--245},
  issn={0021-8693},
  review={\MR{1480783}},
  doi={10.1006/jabr.1997.7093},
}
\bib{torres2022}{article}{
  author={Torres-L\'{o}pez, H.},
  author={Zamora, A. G.},
  title={$H$-stability of syzygy bundles on some regular algebraic surfaces},
  journal={Beitr. Algebra Geom.},
  volume={63},
  date={2022},
  number={3},
  pages={589--598},
  issn={0138-4821},
  review={\MR{4473919}},
  doi={10.1007/s13366-021-00594-z},
}
\bib{trivedi2010}{article}{
  author={Trivedi, V.},
  title={Semistability of syzygy bundles on projective spaces in positive characteristics},
  journal={Internat. J. Math.},
  volume={21},
  date={2010},
  number={11},
  pages={1475--1504},
  issn={0129-167X},
  review={\MR{2747739}},
  doi={10.1142/S0129167X10006598},
}
\end{biblist}
\end{bibdiv}
\end{document}